\title{Bordered Floer Homology of (2,2n)-Torus Link Complement}
\author{Jaepil Lee}
\newtheorem{thm}{Theorem}[section]    
\newtheorem{lem}{Lemma}[section]      
\newtheorem{prop}[thm]{Proposition}
\theoremstyle{definition}
\newtheorem{defn}[thm]{Definition}    
\newtheorem{rem}[thm]{Remark}             
  \let\c@lem=\c@thm
\begin{document}

\maketitle

\begin{abstract}   
We compute the bordered Floer homology $\widehat{CFDD}$ of (2,2n)-torus link complement and discuss assorted examples and type-$DD$ structure homotopy equivalence.
\end{abstract}

\section{Introduction}

In recent years, Heegaard Floer theory has fascinated many low-dimensional topologists. Developed by  P. Ozsv\'ath and Z. Sz\'abo, Heegaard Floer invariants of closed three-manifolds led to a breakthrough in low dimensional topology. These invariants were recently shown to be equivalent to three-dimensional Seiberg-Witten Floer homology by Kutluhan, Lee and Taubes \cite{KLT}. They were also proven to be equivalent to contact homology by Colin, Ghiggini and Honda \cite{CGH}; this equivalence had initially motivated Oszv\'ath--Szab\'o's constructions. Moreover, Heegaard Floer theory turned out to be useful in defining knot and link invariants. See Ozsv\'ath--Sz\'abo \cite{OZ04, OZ05}, and Rasmussen \cite{Ras02}. These invariants are now known as \emph{knot Floer homology} and \emph{link Floer homology}. In particular, knot Floer homology and Heegaard Floer homology of a three-manifold obtained by integral surgery on knot turned out to be closely related. See Rasmussen \cite{Ras02} and Ozsv\'ath--Sz\'abo \cite{OZ08}. For the link surgery case, the relation was discovered but appeared more complicated than the knot case. See Manolescue--Ozsv\'ath \cite{MO10}. \\

More recently, Lipshitz, Ozsv\'ath and Thurston extended the theory to three-manifolds with nonempty boundary. \emph{Bordered Floer homology}, first introduced in ~\cite{LOT08}, consists of two different modules: $\widehat{CFD}$ and $\widehat{CFA}$. The homotopy type of each module is a topological invariant of a three-manifold with \emph{connected} boundary equipped with a framing (a diffeomorphism to a model surface). The bordered theory is a powerful tool thanks to the pairing theorem:  one can recover the Heegaard Floer homology of a closed 3-manifold decomposed into two pieces by taking ``$\mathcal{A}_{\infty}$-tensor product'' of $\widehat{CFA}$ of the first piece and $\widehat{CFD}$ of the second piece. \\

Bordered Floer homology of a three-manifold with genus one boundary surface is related to knot Floer homology. In \cite[Chapter 11]{LOT08}, they described an algorithm to recover $\widehat{CFD}(S^3 \backslash \nu(K))$ (with arbitrary framing) from knot Floer homology $CFK^-(K)$. This makes it possible to compute the Heegaard Floer homology of the surgered manifold by taking $\mathcal{A}_{\infty}$-tensor product with the solid torus. \\

Moreover, Lipshitz, Ozsv\'ath and Thurston have generalized bordered Floer homology to \emph{doubly bordered Floer homology} \cite{LOT11}. As the name suggests, this is an invariant associated to a three-manifold with two boundary components; we get three different types of bimodules, $\widehat{CFDA}$, $\widehat{CFDD}$, and $\widehat{CFAA}$. An important example of a manifold with two boundary components is $S^3 \backslash \nu(L)$, a complement of a two-component link $L$. \\

In this paper, we give a  calculation of $\widehat{CFDD}(S^3 \backslash \nu(L))$, where $L$ is $(2,2n)$-torus link. For a number of reasons, we mainly focus on the $\widehat{CFDD}$ module. First, it is the easiest bimodule to compute since it does not involve any $\mathcal{A}_{\infty}$ structure. Second, it is always possible to convert $\widehat{CFDD}$ module to $\widehat{CFDA}$ or $\widehat{CFAA}$, by attaching $\widehat{CFAA}(\mathbb{I})$ module to the left or right side of the $\widehat{CFDD}$ module. In Section~\ref{sec:prelim}, we collect the necessary background and notation. The actual calculation is in Section~\ref{sec:main}; the answer is shown in Proposition~\ref{thm:main}. (See also Figure~\ref{fig:26differential} for a $(2,6)$-torus link case.) The simplified version of the answer is in Figure~\ref{fig:simplified01}. We work with a specific Heegaard diagram in order to find the generators and  differentials of the module explicitly. However, only a few of the differentials can be obtained by the  direct examination of their domains; for the remaining differentials, we have to exploit the $A_\infty$-structure of $\widehat{CFAA}$. In Section~\ref{sec:example}, we give several applications of the pairing formula, recovering some known Floer homologies from our calculation, to illustrate and check the result. \\


\textbf{Acknowledgement} The author would like to thank Robert Lipshitz and Peter Ozsv\'ath for very helpful conversation and advice, and Adam Levine for commenting on the final version of this paper. Lastly, I give thanks to my advisor Olga Plamenevskaya for her enormous patience and encouragement.

\section{Background on Doubly Bordered Floer Theory}
\label{sec:prelim}

We will assume that the reader is familiar with bordered Floer homology of a single boundary case. If not, we suggest the reader refer to~\cite{LOT11b} for a brief introduction to the topic. In this section, we merely list the definitions and cursory reviews that will be used in the rest of the paper. We briefly recall algebraic preliminaries of $\mathcal{A}_{\infty}$-algebras and assorted modules used in the original bordered Floer homology~\cite{LOT08}, and generalize the result to focus especially on the doubly bordered Floer homology introduced by Lipshitz, Ozsv\'ath and Thurston \cite{LOT11}. \\

\subsection{Algebraic Preliminaries}
We first begin with the algebra associated to a boundary surface of a three-manifold. In a handle decomposition of a genus $g$ surface $\Sigma^g$, the zero-handle $D$ of $\Sigma^g$ has $2g$ marked points $\mathbf{a}$ on $\partial D = Z$ equipped with a two-to-one matching $M$ between the points so that each one-handle is attached to a pair of matched points. We also fix a point $z$ on $Z$ away from $\mathbf{a}$. This set of data is called a \emph{pointed matched circle} and denoted by $\mathcal{Z} = \{ Z, \mathbf{a}, M, z \}$. $F( \mathcal{Z} )$ denotes the surface obtained by the data and $D \subset F( \mathcal{Z} )$ is called a \emph{preferred disk}. The bordered Floer package associates a $dg$ algebra to $\mathcal{Z}$, which will be a \emph{strands algebra}, and denoted by $\mathcal{A} (\mathcal{Z})$. \\

Since we will be studying the torus boundary case, from now on we will assume that the genus $g$ of the boundary surface equals one. If $g=1$, $\mathcal{A} (\mathcal{Z})$ is $\mathbb{F}_2$-vector space generated by Reeb chords respecting the boundary orientation of $Z =\partial D$. In other words, it is generated by $\rho_I$, $I \in \{ 1, 2, 3, 12, 23, 123 \}$ and two idempotents $\iota_1$ and $\iota_2$ such that $\iota_1 + \iota_2 =1$. The multiplication rule between Reeb chords follows the concatenation rule of labels of chords. For example, $\rho_1 \cdot \rho _{23} = \rho_{123}$. If the labels of two Reeb chords do not match, then it is zero. $\mathcal{I} \subset \mathcal{A}(\mathcal{Z})$ denotes the subalgebra generated by idempotents $\iota_1$ and $\iota_2$. This strands algebra is called a \emph{torus algebra}. A detailed description can be found in Lipshitz, Ozsv\'ath and Thurston \cite{LOT08}. \\

Throughout this section, we will be referring $\mathbb{F}$ to $\mathbb{F}_2$. \\

Next, we will study a (right) \emph{$\mathcal{A}_{\infty}$-module} and a (left) \emph{type-$D$ module}. For an $\mathcal{A}_{\infty}$-algebra $(A, \mu_i)$, an $\mathcal{A}_{\infty}$-module is a $\mathbb{F}$-module $M$, equipped with maps
\begin{displaymath}
m_i : M \otimes A^{\otimes (i-1)} \rightarrow M
\end{displaymath}
satisfying compatibility relations
\begin{eqnarray*}
0 & = & \sum_{i+j=n+1} m_i (m_j ( \mathbf{x} \otimes a_1 \otimes \cdots \otimes a_{j-1} ) \otimes \cdots \otimes a_{n-1} ) \\
 & + & \sum_{i+j=n+1} \sum_{l=1}^{n-j} m_i ( \mathbf{x} \otimes a_1 \otimes \cdots a_{l-1} \otimes \mu_j (a_l \otimes \cdots \otimes a_{l+j-1} ) \otimes \cdots \otimes a_{n-1} ),
\end{eqnarray*}
for all $i \geq 1$. An $\mathcal{A}_{\infty}$-module is \emph{strictly unital} if $m_2( \mathbf{x} \otimes 1) = \mathbf{x}$ and $m_i ( \mathbf{x} \otimes a_1 \otimes \cdots \otimes a_{i-1} )=0$ for $i > 2$ and some $a_j =1$. \\

In bordered Floer theory, an $\mathcal{A}_{\infty}$-module is called \emph{type-$A$} module. \\ 

For a $dg$-algebra $(A, \mu_1, \mu_2)$, a type-$D$ module is a $\mathbb{F}$-module equipped with a map $\delta^1 : N \rightarrow A \otimes N$, satisfying the compatibility relation
\begin{displaymath}
0 = (\mu_2 \otimes \mathbb{I}_N ) \circ ( \mathbb{I}_A \otimes \delta^1 ) \circ \delta^1 + (\mu_1 \otimes \mathbb{I}_N ) \circ \delta^1.
\end{displaymath}

These modules are generalized to the following bimodules, namely a \emph{type-$AA$ bimodule} and a \emph{type-$DD$ bimodule}. In this paper, we will be mainly studying these bimodules.  

\begin{defn}
  Let $A$ and $B$ be $\mathcal{A}_{\infty}$-algebras over $\mathbb{F}$ equipped with $\mathcal{A}_{\infty}$-maps $\{ \mu^A_i \}$ and $\{ \mu^B_i \}$, respectively. A \emph{right-right $\mathcal{A}_{\infty}$-bimodule} of \emph{type-$AA$ bimodule} $M_{\mathcal{A}, \mathcal{B}}$ over $A$ and $B$ consists of a right-right $(\mathbb{F}, \mathbb{F})$-bimodule $M$ and maps   
  \begin{displaymath}
    m_{1,i,j} : M \otimes A^{\otimes i} \otimes B^{\otimes j} \rightarrow M
  \end{displaymath}
  such that the following compatibility condition holds.
  \begin{eqnarray*}
    0=\sum_{\substack{ k+l = i+1 \\ \lambda + \eta = j+1}}
    m_{1,k,\lambda} ( m_{1,l,\eta} ( \mathbf{x}, a_1 \otimes
    \cdots \otimes a_{l-1}, b_1 \otimes \cdots \otimes
    b_{\lambda -1} ), a_l \otimes \cdots \otimes a_{i-1},
    b_{\lambda} \otimes \cdots \otimes b_{j-1} ) \\
    + \sum_{k+l = i+1} \sum_{n=1}^{i-l} m_{1,k,j} ( \mathbf{x},
    a_1 \otimes \cdots \otimes a_{n-1} \otimes \mu^A_l ( a_n
    \otimes \cdots \otimes a_{n+l-1} ) \otimes \cdots \otimes
    a_{i-1} , b_1 \otimes \cdots \otimes b_{j-1} ) \\
    + \sum_{\lambda+\eta = j+1} \sum_{n=1}^{j-\eta} m_{1,i,\lambda}
    (\mathbf{x}, a_1 \otimes \cdots \otimes a_{i-1} ,
    b_1 \otimes \cdots \otimes b_{n-1} \otimes \mu^B_l ( b_n
    \otimes \cdots \otimes b_{n+l-1} ) \otimes \cdots \otimes
    b_{j-1} ) \\
  \end{eqnarray*}
  for all $i$ and $j$. 
\end{defn}  
By writing $m = \sum_{i,j} m_{1,i,j}$, the compatibility condition can be drawn as the diagram below.
\begin{displaymath}
  \xymatrix @-1pc {
    \ar@{-->}[dd] & \ar[d] & \ar[d] & & \ar@{-->}[ddd] & \ar[d] & \ar@/^1pc/[dddll] & & \ar@{-->}[ddd] & \ar@/^/[dddl] & \ar[d] \\
    & \Delta^A \ar[dl] \ar[ddl] & \Delta^B \ar@/^/[dll] \ar@/^/[ddll] & & & \overline{D}^A \ar[ddl] & & & & & \overline{D}^B \ar[ddll] & \\
    m \ar@{-->}[d] & & & + & & & & + & & & & = 0 \\
    m \ar@{-->}[d] & & & & m \ar@{-->}[d] & & & & m \ar@{-->}[d] & & & \\
    & & & & & & & & & & &
  }
\end{displaymath}
The dashed line above represents a module element, and the regular line represents an element from tensor algebra $\mathcal{T}^* A$ and $\mathcal{T}^* B$. The map $\Delta^A : \mathcal{T}^* A \rightarrow \mathcal{T}^* A \otimes \mathcal{T}^* A$ represents the canonical comultiplication
\begin{displaymath}
  \Delta^A(a_1 \otimes \cdots \otimes a_n) = \sum_{m=0}^n (a_1 \otimes \cdots \otimes a_m) \otimes (a_{m+1} \otimes \cdots \otimes a_n),
\end{displaymath}
and $\overline{D}^A : \mathcal{T}^* A \rightarrow \mathcal{T}^* A$ is defined as
\begin{displaymath}
\overline{D}^A (a_1 \otimes \cdots \otimes a_n ) = \sum_{j=1}^n \sum_{l=1}^{n-j+1} a_1 \otimes \cdots \otimes \mu_j^A ( a_l \otimes \cdots \otimes a_{l+j-1}) \otimes \cdots \otimes a_n.
\end{displaymath}
$\Delta^B$ and $\overline{D}^B$ are defined similarly. \\

\begin{defn}
Let $A$ and $B$ be $\mathcal{A}_{\infty}$-algebras over $\mathbb{F}$. A \emph{left-left type-$DD$ bimodule} ${}^{A, B} M$ over $A$ and $B$ consists of left-left $(\mathbb{F}, \mathbb{F})$-bimodule $M$ and maps
  \begin{displaymath}
    \delta^1 : M \rightarrow A \otimes B \otimes M 
  \end{displaymath}
satisfying the following compatibility condition.
  \begin{displaymath}
    ( ( \mu_2^L , \mu_2^R ) \otimes \mathbb{I}_M ) \circ ( (
    \mathbb{I}_A , \mathbb{I}_B ) \otimes \delta^1 ) \circ \delta^1
    + ( (\mu_1^L , \mathbb{I}_B) \otimes \mathbb{I}_M ) \circ
    \delta^1 + ( (\mathbb{I}_A, \mu_1^R) \otimes \mathbb{I}_M )
    \circ \delta^1 = 0.
  \end{displaymath}
\end{defn}
Again, the compatibility condition is drawn as the diagram below.
\begin{displaymath}
  \xymatrix @-1pc {
    & & \ar@{-->}[d] & & & & \ar@{-->}[d] & & & & \ar@{-->}[d] & \\
    & & \delta^1 \ar@{-->}[d] \ar@/_/[ddl] \ar@/_/[ddll] & & & & \delta^1 \ar@{-->}[ddd] \ar@/_/[ddl] \ar@/_1pc/[dddll] & & & & \delta^1 \ar@{-->}[ddd] \ar@/_1pc/[ddll] \ar@/_/[dddl] & \\
    & & \delta^1 \ar@{-->}[dd] \ar[dl] \ar[dll] & + & & & & + & & & & = 0 \\
    \mu_2 \ar[d] & \mu_2 \ar[d] & & & & \mu_1 \ar[d] & & & \mu_1 \ar[d] & & & \\
    & & & & & & & & & & &
  }
\end{displaymath}

\subsection{Heegaard diagram of the bordered three-manifold}

A \emph{bordered three-manifold} is a quadruple $(Y_1, \Delta_1, z_1, \psi_1)$, where $Y_1$ is a three-manifold with boundary, $\Delta_1$ is a disk in $\partial Y_1$, $z_1$ is a point in $\partial \Delta_1$, and $\psi_1 : ( F( \mathcal{Z} ), D, z ) \rightarrow (\partial Y_1, \Delta_1, z_1 ) $ is a parametrization of boundary. That is , $\psi$ is a homeomorphism from $F (\mathcal{Z})$ to $\partial Y_1$ sending $D$ to $\Delta_1$ and $z$ to $z_1$. \\

To describe a bordered three-manifold, we use a \emph{bordered Heegaard diagram}.

\begin{defn}
A \emph{bordered Heegaard diagram} is a quadruple $\mathcal{H} = ( \overline{\Sigma}, \overline{\boldsymbol{\alpha}}, \boldsymbol{\beta}, z )$ consisting of
\begin{itemize}
  \item a compact, oriented surface $\overline{\Sigma}$ of genus $g$ with a single boundary component;
  \item a $g$-tuple of disjoint circles $\boldsymbol{\beta} = \{ \beta_1, \cdots, \beta_g \}$ in the interior of $\overline{\Sigma}$;
  \item a $g+k$-tuple of disjoint curves $\overline{ \boldsymbol{\alpha} } = \boldsymbol{\alpha}^c \cup \boldsymbol{\alpha}^a$ in $\overline{\Sigma}$, where $\boldsymbol{\alpha}^c = \{ \alpha^c_1, \cdots \alpha^c_{g-k} \}$ is a set of circles in the interior of $\overline{\Sigma}$, and $\boldsymbol{\alpha}^a = \{ \alpha^a_1, \cdots, \alpha^a_{2k} \}$ is a set of arcs whose boundaries are in $\partial \overline{\Sigma}$;
  \item a point $z$ in $\partial \overline{\Sigma}$, away from the boundaries of arcs in $\boldsymbol{ \alpha }^a$,
\end{itemize}
such that $\overline{\Sigma} \backslash \overline{ \boldsymbol{\alpha} }$ and $\overline{\Sigma} \backslash \boldsymbol{ \beta }$ are connected, and $\overline{ \boldsymbol{\alpha} }$ and $\boldsymbol{ \beta }$ intersect transversally.
\end{defn}
We construct a bordered three-manifold from a bordered Heegaard diagram $\mathcal{H}$ in the following manner. First, we obtain a three-manifold with boundary $Y(\mathcal{H})$ by thickening $\overline{\Sigma} \times [0,1]$ and attaching a three-dimensional two-handle to each $\alpha^c_i \times \{ 0 \} \times \overline{\Sigma}$ and a three-dimensional two-handle to each $\beta_i \times \{ 1 \} \times \overline{\Sigma}$. The boundary of the resulting manifold is a genus $k$ surface, and the surface is decomposed into a disk $D$ and a genus $k$ surface with a single boundary by $\partial \overline{\Sigma} \times \{ 1 \} $. Then, we get a bordered three-manifold $( Y( \mathcal{H}), D, z, \psi )$, where $\psi$ is determined by $\boldsymbol{\alpha}^a$, which is considered as a parametrization data of the surface. \\

A bordered Floer package defines a type-$D$ module $\widehat{CFD}( \mathcal{H})$ and a type-$A$ module $\widehat{CFA}( \mathcal{H} )$ from a bordered Heegaard diagram $\mathcal{H}$, which are well defined up to quasi-isomorphism. Each module has a generating set $\mathfrak{S} (\mathcal{H} )$, whose element $\mathbf{x} = \{ x_1, \cdots, x_g \} $ is a $g$-tuple of points in $\overline{\Sigma}$ such that
\begin{itemize}
  \item exactly one $x_i$ lies on each $\beta$-circle,
  \item exactly one $x_i$ lies on each $\alpha$-circle and
  \item at most one $x_i$ lies on each $\alpha$-arc.
\end{itemize}
To compute nontrivial differentials for the Floer theory, we will need to compute holomophic curves in $\overline{\Sigma} \times I_s \times \mathbb{R}_t$, where $I_s = [0,1]$ with parameter $s$ and $\mathbb{R}_t$ is $\mathbb{R}$ with parameter $t$. We will consider curves whose boundaries are on $\overline{ \boldsymbol{\alpha} } \times  \{ 1 \} \times \mathbb{R}_t$ and $\boldsymbol{\beta} \times \{ 0 \} \times \mathbb{R}_t$, asymtotic to $\mathbf{x} \times I_s$ and $\mathbf{y} \times I_s$ at $t = \pm \infty$ for $\mathbf{x}, \mathbf{y} \in \mathfrak{ S }( \mathcal{H} )$. Each of the curves carries a relative homology class in the relative homology group
\begin{eqnarray*}
& H_2 ( & \overline{\Sigma}	\times I_s \times [-\infty, +\infty], \\ 
& &( \ ( \ \overline{\boldsymbol{\alpha}} \times \{ 1 \} \cup \boldsymbol{\beta} \times \{ 0 \} \cup ((\partial \overline{\Sigma} \backslash z) \times I_s) \ ) \times [-\infty, +\infty] \ ) \cup \\
& & ( \ (\mathbf{x} \times I_s \times \{ -\infty \} ) \cup ( \mathbf{y} \times I_s \times \{ +\infty \} )   \ ) \ ).
\end{eqnarray*}
We write $\pi_2 ( \mathbf{x}, \mathbf{y} )$ as the set of these relative homology classes. \\

Note that for $B \in \pi_2 (\mathbf{x}, \mathbf{y})$, projecting $B$ onto $\overline{\Sigma}$ gives an element in $H_2 (\overline{\Sigma}, \overline{\boldsymbol{\alpha}} \cup \boldsymbol{ \beta} \cup \partial \overline{\Sigma} ) $. This is a linear combination of components of $\overline{\Sigma} \backslash ( \overline{\boldsymbol{\alpha}} \cup \boldsymbol{\beta} )$. This linear combination will be called \emph{domains}. Typically a domain is written as a linear combination of regions (connected subset of $\overline{\Sigma} \backslash (\overline{\boldsymbol{\alpha}} \cup \boldsymbol{\beta}) $.) In particular, if any $B \in \pi_2 ( \mathbf{x}, \mathbf{y} )$ is meeting $( \partial \overline{\Sigma} \backslash z ) \times I_s \times [-\infty, +\infty]$, then it can be interpreted as the corresponding domain being adjacent to the boundary of $\overline{\Sigma}$, and that gives a sequence of Reeb chords $\boldsymbol{\rho} =( \rho_1, \cdots, \rho_n) $. We call $(B, \boldsymbol{\rho} )$ a \emph{compatible pair}. \\ 

There is an operation $ * : \pi_2 ( \mathbf{x}, \mathbf{y} ) \times \pi_2 ( \mathbf{y}, \mathbf{z} ) \rightarrow \pi_2 (\mathbf{x}, \mathbf{z} )$, defined by concatenating two homology classes in the $t$ factor. In particular, if $\pi_2 ( \mathbf{x}, \mathbf{y} )$ is nonempty, then the action of $\pi_2 ( \mathbf{x}, \mathbf{x} )$ on $\pi_2 ( \mathbf{x}, \mathbf{y} )$ is free and transitive. The domain of the element in $\pi_2 ( \mathbf{x}, \mathbf{x} )$ is called \emph{periodic domain}. In addition, $\pi_2^{\partial} (\mathbf{x}, \mathbf{x} ) $ denotes a set of periodic domains not adjacent to the boundary. An element in $\pi_2^{\partial} (\mathbf{x}, \mathbf{x} ) $ is a \emph{provincial periodic domain}, and if every provincial periodic domain of a Heegaard diagram has both positive and negative coefficients, then the Heegaard diagram is called \emph{provincially admissible}.  \\

It is worth mentioning that 
\begin{itemize}
  \item if any $B \in \pi_2 ( \mathbf{x}, \mathbf{y} )$ is representing a holomorphic curve, then all the coefficients of the domain of $B$ must be nonnegative, and
  \item the operation $*$ of two classes corresponds to the sum of the respective domains.
\end{itemize}
We sometimes blur the distinction between homology classes and their domains if it does not cause confusion. \\

We define $\widehat{CFD}( \mathcal{H} )$ as the following. Let $X ( \mathcal{H} )$ be the $\mathbb{F}$-module generated by $\mathfrak{S} ( \mathcal{H})$ equipped with an action of $\mathcal{I} \subset \mathcal{A}=\mathcal{A} (- \mathcal{Z})$ (the negative sign means the algebra obtained from the pointed matched circle has an orientation opposite from the induced orientation of $\mathcal{H}$) such that for any idempotent $\iota \in \mathcal{I}$,
\begin{displaymath}
\iota \otimes \mathbf{x} := \left\{
  \begin{array}{ll}
    \mathbf{x} & \textrm{if the arc corresponding to $\iota$ is not occupied by $\mathbf{x}$} \\
    0 & \textrm{otherwise.}
  \end{array}
\right.
\end{displaymath}
Then $\widehat{CFD}( \mathcal{H} ) := \mathcal{A} \otimes_{\mathcal{I}} X ( \mathcal{H} ) $. Its differential $\delta^1$ is defined as
\begin{displaymath}
\delta^1 ( \mathbf{x}) := \sum_{\mathbf{y} \in \mathfrak{S} (\mathcal{H})} \sum_{B \in \pi_2 (\mathbf{x}, \mathbf{y}) } a^B_{\mathbf{x},\mathbf{y}} \cdot \mathbf{y},
\end{displaymath}
where 
\begin{displaymath}
a^B_{\mathbf{x}, \mathbf{y} } := \sum_{ \{ \boldsymbol{\rho} | \textrm{ind}(B, \boldsymbol{\rho})=1 \} } \sharp ( \mathcal{M}^B(\mathbf{x}, \mathbf{y} ; \boldsymbol{\rho}) ) a(- \boldsymbol{\rho} ).
\end{displaymath}
Here, $\mathcal{M}^B(\mathbf{x}, \mathbf{y} ; \boldsymbol{\rho})$ denotes the moduli space of holomorphic curves of $B$ representing the compatible pair $(B, \boldsymbol{\rho})$, and $\textrm{ind}(B, \boldsymbol{\rho})$ the expected dimension of the moduli space. In addition, for $\boldsymbol{\rho} = \{ \rho_1, \cdots, \rho_n \}$ a sequence of Reeb chords, $a (- \boldsymbol{\rho})$ be the product $a(-\rho_1) \cdots a(-\rho_n) \in \mathcal{A}$. (Again, the negative sign means that the orientation of the boundary $\partial \overline{\Sigma}$ is opposite from the induced orientation.) \\

The differential $\delta^1$ may not be well defined. In fact, there may be infinitely many homology classes in $\pi_2 ( \mathbf{x}, \mathbf{y} )$ if there is a periodic domain representing a holomorphic curve. To prevent this, we will work on a Heegaard diagram such that every periodic domain has both positive and negative coefficients. Such diagram is called \emph{admissible}, and it is shown in~\cite[Proposition 4.25]{LOT08} that every Heegaard diagram is isotopic to an admissible Heegaard diagram. (In fact, the provincial admissibility also ensures the sum is finite since the concatenation of non-provincial periodic domains of holomorphic curves produces an algebra element that equals zero.) \\

The definition of $\widehat{CFA}(\mathcal{H})$ is similar. $\widehat{CFA}( \mathcal{H} )$ is a $\mathbb{F}$-module generated by $\mathfrak{S}( \mathcal{H} )$, equipped with an action of $\mathcal{I} \subset \mathcal{A}( \mathcal{Z} ) $ such that
\begin{displaymath}
\mathbf{x} \otimes \iota := \left\{
  \begin{array}{ll}
    \mathbf{x} & \textrm{if the arc corresponding to $\iota$ is occupied by $\mathbf{x}$} \\
    0 & \textrm{otherwise.}
  \end{array}
\right.
\end{displaymath} 
$\widehat{CFA}(\mathcal{H})$ is $\mathbb{F}$-module $X(\mathcal{H})$ generated by $\mathfrak{S}$, equipped with the $\mathcal{A}_{\infty}$-module maps
\begin{displaymath}
m_{i+1} : X( \mathcal{H} ) \otimes \underbrace{ \mathcal{A}(\mathcal{Z}) \otimes \cdots \otimes \mathcal{A}( \mathcal{Z} ) }_{\textrm{$i$-times} } \rightarrow X ( \mathcal{H} )
\end{displaymath}
such that
\begin{eqnarray*}
m_{n+1}(\mathbf{x}, \rho_1, \cdots, \rho_n ) &:= &\sum_{\mathbf{y} \in \mathfrak{S}(\mathcal{H} ) } 
\sum_{ \begin{subarray}{l} B \in \pi_2 ( \mathbf{x}, \mathbf{y}) \\ \textrm{ind}(B, \boldsymbol{\rho})=1 
\end{subarray} } \sharp ( \mathcal{M}^B (\mathbf{x}, \mathbf{y}; \boldsymbol{\rho}) ) \mathbf{y}, \\
m_2(\mathbf{x},1) &:= &\mathbf{x} \\
m_{n+1} (\mathbf{x}, \cdots, 1, \cdots) &:=& 0, \quad n>1.
\end{eqnarray*} 

Although these modules are defined via a specific Heegaad diagram $\mathcal{H}$, it turns out the homotopy type of these modules are well defined. Thus, they are modules defined on bordered three-manifold (with single boundary). 

\subsection{Doubly bordered three-manifold}
The bordered three-manifold is easily extended to a three-manifold with two boundary components. A \emph{doubly bordered three-manifold} has the following data; $( Y_{12}, \Delta_1, \Delta_2, z_1, z_2, \psi_1, \psi_2, \gamma ) $. $Y_{12}$ is an oriented three-manifold with boundary $F( \mathcal{Z}_1 ) \amalg F( \mathcal{Z}_2 )$, $\Delta_i$ is a preferred disk of surface $F ( \mathcal{Z}_i )$, $z_i$ is a point on $\partial \Delta_i$, and $\psi_i$ is a parametrization of $F ( \mathcal{Z}_i )$, $i=1,2$. Moreover, $\gamma$ is an arc connecting $z_1$ and $z_2$, equipped with a framing pointing into $\Delta_i$. \\

A doubly bordered three-manifold can be realized by a Heegaard diagram with two boundaries, namely \emph{arced bordered Heegaard diagram with two boundaries}.

\begin{defn}
  An \emph{arced bordered Heegaard diagram $\mathcal{H}$ with two boundaries} is a tuple $(\Sigma, \overline{\boldsymbol{\alpha}}, \boldsymbol{\beta}, \mathbf{z})$ satisfying:
  \begin{itemize}
    \item $\overline{\Sigma}$ is a compact, genus $g$ surface with two boundary components $\partial_L \overline{\Sigma}$ and $\partial_R \overline{\Sigma}$.
    \item $\boldsymbol{\beta}$ is $g$-tuple of pairwise disjoint curves in the interior of $\overline{\Sigma}$.
    \item $\overline{\boldsymbol{\alpha}} = \{ \boldsymbol{\alpha}^{a,L}=\{ \alpha_1^{a,L}, \cdots, \alpha_{2l}^{a,L} \}, \ \boldsymbol{\alpha}^{a,R}=\{\alpha_1^{a,R}, \cdots, \alpha_{2r}^{a,R} \}, \ \boldsymbol{\alpha}^{c}=\{\alpha_1^c, \cdots, \alpha_{g-l-r}^c \} \}$, is a collection of pairwise disjoint embedded arcs with  boundary on $\partial_L \overline{\Sigma}$ (the $\alpha_i^{a,L}$), arcs with boundary on $\partial_R \overline{\Sigma}$ (the $\alpha_i^{a,R}$), and circles (the $\alpha_i^c$) in the interior of $\overline{\Sigma}$.
    \item $\mathbf{z}$ is a path in $\overline{\Sigma} \backslash (\overline{\boldsymbol{\alpha}} \cup   \boldsymbol{\beta})$ between $\partial_L \overline{\Sigma}$ and $\partial_R \overline{\Sigma}$,
  \end{itemize}
such that $\overline{ \boldsymbol{\alpha} }$ intersects $\boldsymbol{\beta}$ transversely, and $\overline{\Sigma} \backslash \overline{\boldsymbol{\alpha}}$ and $\overline{\Sigma} \backslash \boldsymbol{\beta}$ are connected.
\end{defn}

Note that an arced bordered Heegaard diagram $\mathcal{H}$ specifies two pointed matched circles on its ``left'' and ``right'' boundary. (The choice of ``left'' and ``right'' are arbitrary.) These are 
\begin{eqnarray*}
\mathcal{Z}_L ( \mathcal{H} ) & = & ( \partial_L \overline{\Sigma}, \boldsymbol{\alpha}^{a,L} \cap \partial_L \overline{\Sigma}, M_L, \mathbf{z} \cap \partial_L \overline{\Sigma}), \quad \textrm{and} \\
\mathcal{Z}_R ( \mathcal{H} ) & = & ( \partial_R \overline{\Sigma}, \boldsymbol{\alpha}^{a,R} \cap \partial_R \overline{\Sigma}, M_R, \mathbf{z} \cap \partial_R \overline{\Sigma}).
\end{eqnarray*}

The construction of a doubly bordered three-manifold is similar to the construction of a single boundary case. For an arced bordered Heegaard diagram $\mathcal{H}$, cut open the diagram along the arc $\mathbf{z}$. The resulting diagram is a bordered Heegaard diagram with a single boundary, which will be written as $\mathcal{H}_{dr}$. Then, construct a bordered three-manifold $Y( \mathcal{H}_{dr} )$. The boundary of $Y( \mathcal{H}_{dr} )$ is a surface that can be decomposed as a connected sum $F( \mathcal{Z}_L ) \sharp F( \mathcal{Z}_R )$. Finally, attach a three-dimensional two-handle along the connect sum annulus. \\ 

The three-manifold $Y( \mathcal{H} ) := Y ( \mathcal{H}_{dr} ) \cup \{ \textrm{two-handle} \}$ has the following properties.
\begin{itemize}
  \item It has two boundary surfaces $F(\mathcal{Z}_L)$ and $F(\mathcal{Z}_R)$ with parametrization given by $\boldsymbol{\alpha}^{a,L}$ and $\boldsymbol{\alpha}^{a,R}$, respectively.
  \item Each boundary surface has a preferred disk bounded by $\partial_L \overline{\Sigma}$ or $\partial_R \overline{\Sigma}$.
  \item The cut open of the Heegaard diagram $\mathcal{H}$ would result in two arcs $\mathbf{z}^+$ and $\mathbf{z}^-$ on the deleted neighborhood of $\mathbf{z}$. Then, the arc $\mathbf{z}^+$, thought as a subset of the boundary of $Y( \mathcal{H}_{dr} )$, is the framed arc in $Y( \mathcal{H} )$ connecting $z_1$ and $z_2$.
\end{itemize}

For an arced Heegaard diagram $\mathcal{H}$, the type-$DD$ bimodule $\widehat{CFDD} ( \mathcal{H} )$ is defined almost the same as in $\widehat{CFD}$. $\widehat{CFDD} ( \mathcal{H} )$ is a left-left $\mathbb{F}$-$\mathbb{F}$-module generated by $\mathfrak{S} ( \mathcal{H}_{dr} )$, equipped with two left actions of $\mathcal{I}_L \subset \mathcal{A}_L := \mathcal{A} (-\mathcal{Z}_L) $ and $\mathcal{I}_R \subset \mathcal{A}_R := \mathcal{A}(-\mathcal{Z}_R) $ such that for $\iota_L \in \mathcal{I}_L$ and $\iota_R \in \mathcal{I}_R$, 
\begin{displaymath}
\iota_L \otimes \iota_R \otimes \mathbf{x} := \left\{
  \begin{array}{ll}
    \mathbf{x} & \textrm{if the arc corresponding to $\iota_L$ and $\iota_R$ are not occupied by $\mathbf{x}$} \\
    0 & \textrm{otherwise.}
  \end{array}
\right.
\end{displaymath} 

Then $\widehat{CFDD}(\mathcal{H} ) = \mathcal{A}_L \otimes \mathcal{A}_R \otimes \mathfrak{S}( \mathcal{H}_{dr} )$ with the differential
\begin{displaymath}
\delta^1 ( \mathbf{x}) := \sum_{\mathbf{y} \in \mathfrak{S} (\mathcal{H}_{dr})} \sum_{B \in \pi_2 (\mathbf{x}, \mathbf{y}) } a^B_{\mathbf{x},\mathbf{y}} \cdot \mathbf{y},
\end{displaymath}
where 
\begin{displaymath}
a^B_{\mathbf{x}, \mathbf{y} } := \sum_{ \{ \boldsymbol{\rho}_L, \boldsymbol{\rho}_R | \textrm{ind}(B, \boldsymbol{\rho}_L, \boldsymbol{\rho}_R )=1 \} } \sharp ( \mathcal{M}^B(\mathbf{x}, \mathbf{y} ; \boldsymbol{\rho}_L, \boldsymbol{\rho}_R) ) a(- \boldsymbol{\rho}_L ) \otimes a(- \boldsymbol{\rho}_R ).
\end{displaymath}

Similarly, a type-$AA$ bimodule $\widehat{CFAA}( \mathcal{H} )$ is defined by a right-right $\mathbb{F}$-$\mathbb{F}$ bimodule generated by $\mathfrak{S} ( \mathcal{H}_{dr} )$ with right-right actions of idempotents.
\begin{displaymath}
\mathbf{x} \otimes \iota_L \otimes \iota_R := \left\{
  \begin{array}{ll}
    \mathbf{x} & \textrm{if the arc corresponding to $\iota_L$ and $\iota_R$ are occupied by $\mathbf{x}$} \\
    0 & \textrm{otherwise.}
  \end{array}
\right.
\end{displaymath} 
The type-$AA$ module maps are 
\begin{displaymath}
m_{n+m+1}(\mathbf{x}, \rho^L_1, \cdots, \rho^L_n, \rho^R_1, \cdots, \rho^R_m ) := \sum_{\mathbf{y} \in \mathfrak{S}(\mathcal{H} ) } 
\sum_{ \begin{subarray}{l} B \in \pi_2 ( \mathbf{x}, \mathbf{y}) \\ \textrm{ind}(B, \boldsymbol{\rho}^L, \boldsymbol{\rho}^R )=1 
\end{subarray} } \sharp ( \mathcal{M}^B (\mathbf{x}, \mathbf{y}; \boldsymbol{\rho}^L, \boldsymbol{\rho}^R) ) \mathbf{y}.
\end{displaymath} 

Lastly, the expected dimension of the moduli space of $\mathcal{M}^B (\mathbf{x}, \mathbf{y} ; \overrightarrow{\rho^L}, \overrightarrow{\rho^R})$, or $\textrm{ind}(B, \overrightarrow{\rho^L}, \overrightarrow{\rho^R})$ is computed by the formula below.
\begin{displaymath}
  \textrm{ind}(B, \boldsymbol{\rho}) = e(B) + n_{\mathbf{x}}(B) +
  n_{\mathbf{y}}(B) + |\overrightarrow{\rho^L}| +
  |\overrightarrow{\rho^R}| + \iota({\overrightarrow{\rho^L}}) +
  \iota({\overrightarrow{\rho^R}}),
\end{displaymath}
where $e(B)$ is \emph{Euler measure}, $n_{\mathbf{x}}(B)$ sum of average of local multiplicities surrounding generator $\mathbf{x}$, $|\overrightarrow{\rho^L}|$ number of Reeb chords in the sequence
$\overrightarrow{\rho^L}$, and $\iota( \overrightarrow{\rho^L} )$ \emph{linking number} of sequence $\overrightarrow{\rho^L}$. See \cite[Definition 5.11]{LOT08}. 

\subsection{Pairing Theorem}
The type-$A$ module and type-$D$ modules can be paired, which results in the classical Heegaard Floer homology of a closed three-manifold. The original pairing theorem is given in \cite[Theorem 1.3]{LOT08}. For any two three-manifolds $Y_1$ and $Y_2$ with $\partial Y_1 = F (\mathcal{Z}) = - \partial Y_2$,
\begin{displaymath}
\widehat{CFA}( Y_1) \widetilde{\otimes}_{\mathcal{A}(\mathcal{Z})} \widehat{CFD}( Y_2) \cong \widehat{CF} (Y_1 \cup_{F( \mathcal{Z} )} Y_2 )
\end{displaymath} 
where $\widetilde{\otimes}$ denotes the derived tensor product. The bimodule version of the pairing theorem is also given in~\cite{LOT11}. If $Y_{12}$ is a doubly bordered three-manifold with boundary $F( \mathcal{Z}_1 ) \amalg F( \mathcal{Z}_2) $ and $Y_1$ is a bordered three-manifold with boundary $F( \mathcal{Z}_1 )$, then 
\begin{displaymath}
\widehat{CFD}(Y_1 \cup_{F(\mathcal{Z}_1) } Y_{12} ) \cong \widehat{CFA}(Y_1) \widetilde{\otimes}_{\mathcal{A} (\mathcal{Z}_1) } \widehat{CFDD}(Y_{12}). 
\end{displaymath}
There exists many other variations of the pairing theorem. Interested readers should refer to~\cite{LOT11}.

\section{Computation of the bordered Floer bimodule of the (2,2n)-torus link}
\label{sec:main}

\subsection{Schubert normal form and diagram of 2-bridge link complements}

As we will mainly focus on 2-bridge links, it is useful to mention \emph{Schubert normal form} of a 2-bridge links (or knots). Let $p$ be an even positive integer and $q$ be an integer such that $0<q<p$ and $\mathrm{gcd}(p,q)=1$. Let us consider a circle with $2p$ marked point on its boundary. Choose a point and label it $a_0$. Label the other points $a_1, \cdots, a_{2p-1}$ in a clockwise direction. Then, connect $a_i$ and $a_{2p-i}$ with a straight line, $i=1, \cdots, p-1$. Finally, connect $a_0$ and $a_p$ with an
underbridge, a straight line that crosses below all of the other straight lines. \\

Now consider two copies of such circle. Draw arcs between these two circles so that each arc is connecting $a_i$ on the left circle to $a_{q-i}$ on the right circle (the labeling is modulo $2p$). These arcs
should not intersect any of the straight lines and arcs. The resulting diagram gives a link that we denote $S(p,q)$. The diagram is called \emph{Schubert normal form} of the link. See Figure~\ref{fig:83link}. More detailed description, especially about the Schubert normal form of 2-bridge knot can be found in~\cite[Chapter 2]{Ras02}. \\

\begin{figure}
  \centering
  \includegraphics[width=1\textwidth]{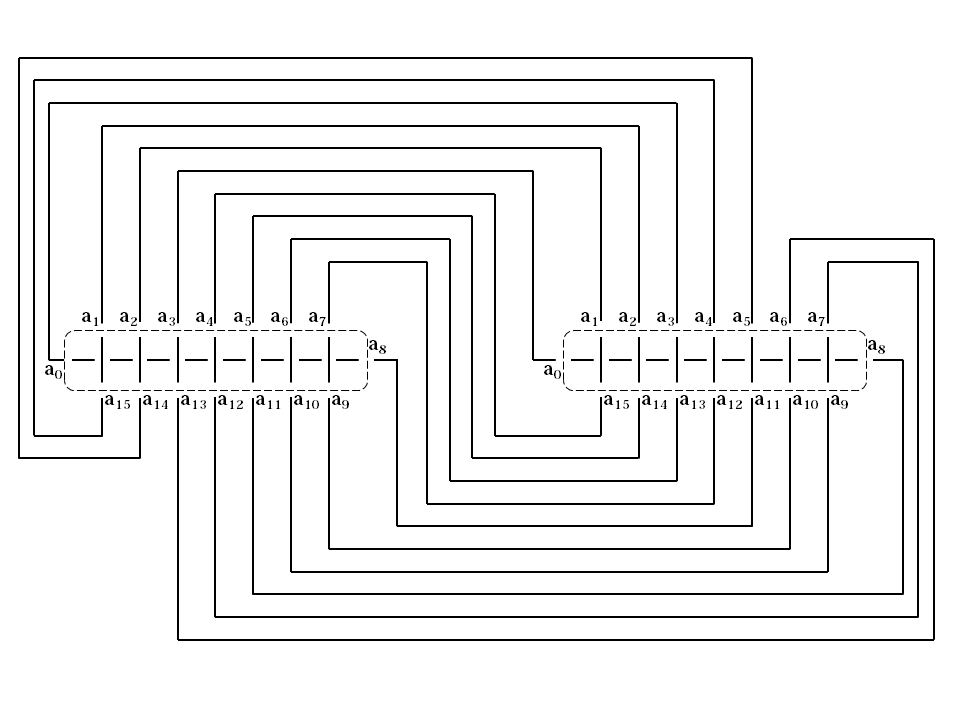}
  \caption{Schubert normal form of $S(8,3)$-link. According to Thistlethwaite's table, it is $L5a1$ link.}
  \label{fig:83link}
\end{figure}

Recall that 2-bridge link $L$ is a link in $S^3$ that admits a link diagram with two maxima and two minima. Let $B_1$ and $B_2$ be small neighborhoods of those two maxima. Consider $(S^3 \backslash \nu L) \backslash (B_1 \cup B_2)$. Drilling a tunnel connecting $B_1$ and $B_2$ gives a three-manifold $Y$ with single boundary, and the boundary is a genus 2 surface. Also, the longitudes $\lambda_L$ and $\lambda_R$ of the left and right components of $L$ are considered as curves on $\partial (\nu L)$; therefore the longitudes are also curves on the boundary of the drilled three-manifold. \\

The resulting manifold can be viewed as a handlebody with one zero-handle and two one-handles attached to it. To get a bordered Heegaard diagram, we will apply the following procedures on the boundary of the three-manifold. First, apply an isotopy of the boundary surface so that the longitudes have the Schubert normal form. Then, draw two circles $\beta_1$ and $\beta_2$ on the boundary surface so that they are parallel to the core of the one-handles on the boundary of the one-handles. Next, draw the meridians $\mu_L$ and $\mu_R$ on the belt sphere of each one-handle. Finally, make two punctures at the two intersections of meridians and longitudes and relabel $\lambda_L$ to $\alpha_1^{a,L}$ and $\mu_L$ to $\alpha_2^{a,L}$ (respectively, $\lambda_R$ to $\alpha_1^{a,R}$ and $\mu_R$ to $\alpha_2^{a,R}$). \\

In particular, if $L$ equals the $(2,2n)$-torus link, then we can draw an arc on the surface connecting two punctures so that the arc is not intersecting $\overline{\boldsymbol{\alpha}}$ or $\boldsymbol{\beta}$ curves. From now on, the punctured boundary surface equipped with $\overline{\boldsymbol{\alpha}}$ or $\boldsymbol{\beta}$ curves and the arc is written as $\mathcal{H}$, which is the arced bordered Heegaard diagram of $(2,2n)$-torus link complement. 
\begin{figure}
  \centering
  \includegraphics[width=1.0\textwidth]{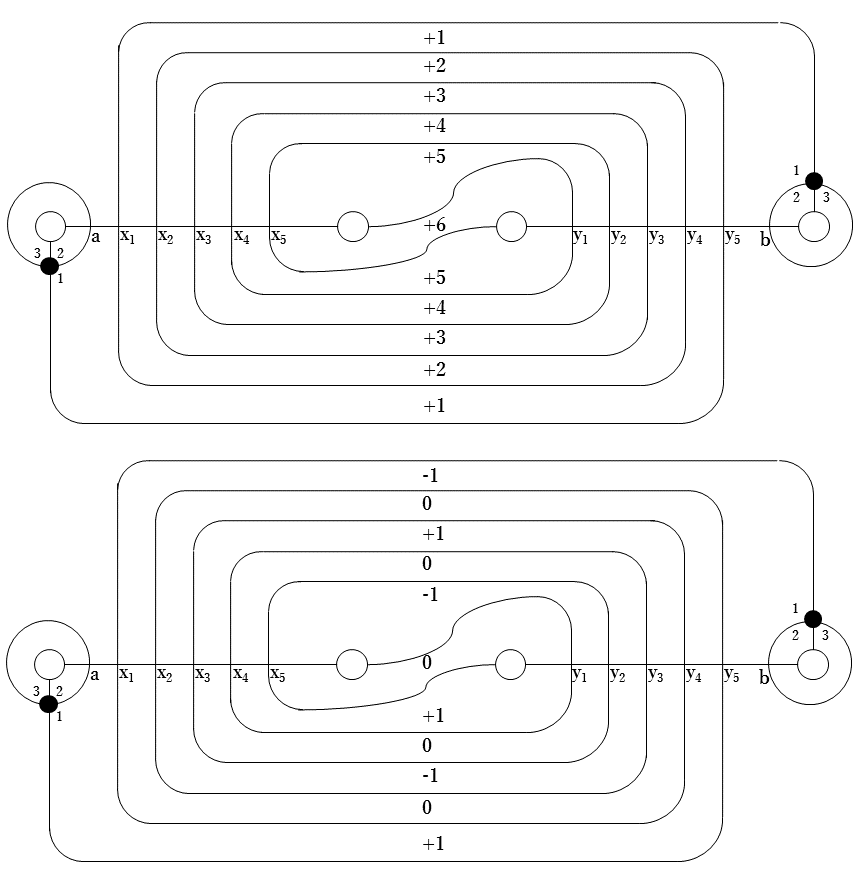}
  \caption{These two diagrams represents two periodic domains of the bordered Heegaard diagram of (2,6)-torus link, where the black dots represent left and right punctures.}
  \label{fig:periodicdomain}
\end{figure}

\begin{figure}
  \centering
  \includegraphics[width=1.1\textwidth]{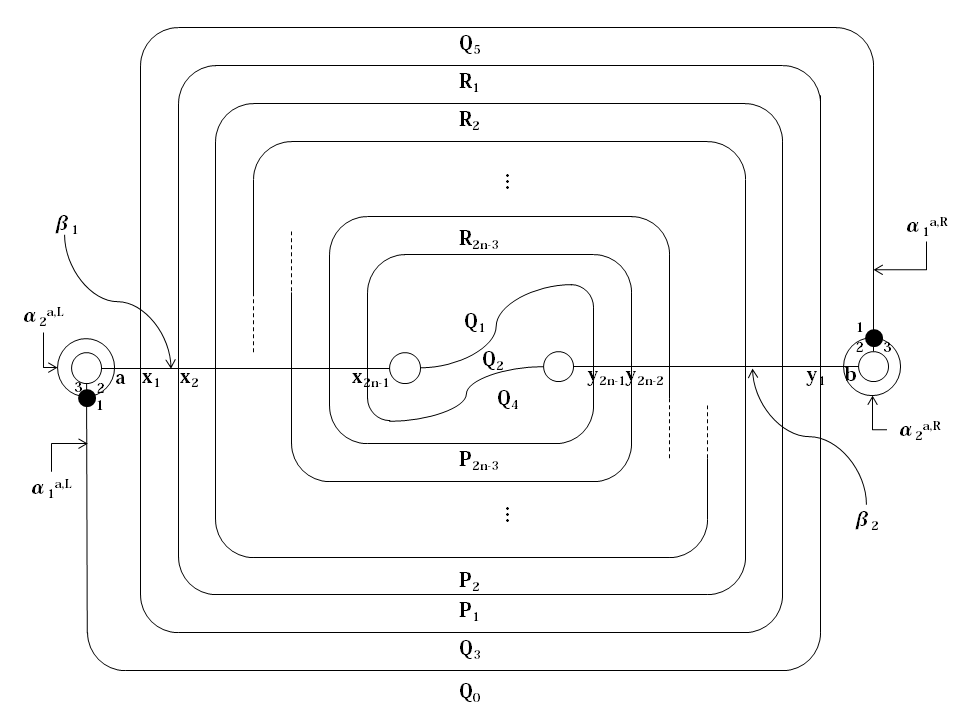}
  \caption{A general diagram of $(2,2n)$ torus link. Domain $Q_0$ has a framed arc. The orientation on the boundaries is opposite from the usual ``right hand'' orientation.}
  \label{fig:generaldiagram}
\end{figure}

\begin{rem}
Readers should be aware that connecting the left and right punctures with an (framed) arc is not always possible. In fact, a domain that is adjacent to both punctures does not exist except for the $(2,2n)$-torus link case. To fix this, choose $\mu_L$ or $\mu_R$ and apply a finger move on the chosen meridian along the longitude so that the resulting puncture is on the domain that is adjacent to the other puncture.
\end{rem}

\subsection{Computation of the type-DD module differential}

Now, we will compute $\widehat{CFDD}(\mathcal{H})$, where $\mathcal{H}$ is the Heegaard diagram of the $(2,2n)$-torus link complement constructed in the previous section. The Heegaard diagram is given in Figure~\ref{fig:generaldiagram}. \\

First, we will see whether the diagram $\mathcal{H}$ is provincially admissible. Second, we will investigate the genus-zero rectangular domains that cause a nontrivial differential. Then, using the result as a building block, we will consider domains of higher genus and the moduli space of homolorphic curves of the domains. The differentials associated to the higher genus domains are computed by $\mathcal{A}_{\infty}$-relations, dualizing $\widehat{CFDD}$-bimodule to $\widehat{CFAA}$-bimodule. \\

\textbf{Periodic domain} First, we investigate periodic domains $\pi_2 (\mathbf{x},\mathbf{x})$. It is well known that $\pi_2 (\mathbf{x},\mathbf{x}) \cong H_2(Y(\mathcal{H}), \partial Y (\mathcal{H})) \cong \mathbb{Z} \oplus \mathbb{Z}$ by the Mayer-Vietoris sequence. Thus, there are two linearly independent periodic domains in the diagram. The proof can be found in \cite[Lemma 2.6.1]{Lip06}, or \cite[Lemma 4.18]{LOT08}. In their proof, they use the isomorphism
\begin{displaymath}
  \boldsymbol{\pi}_2 (\mathbf{x}, \mathbf{x}) \cong H_2 ( \Sigma' \times [0,1] , (\overline{\boldsymbol{\alpha}} \times \{ 1 \}) \cup ( \boldsymbol{\beta} \times \{ 0 \} ) )
\end{displaymath}
where $\Sigma' = (\overline{\Sigma} / \partial \overline{\Sigma} ) \backslash \{ z \}$. The isomorphism given above is proved by investigating the long exact sequence of pair $(\Sigma' \times [0,1] , (\overline{\boldsymbol{\alpha}} \times \{ 1 \}) \cup ( \boldsymbol{ \beta } \times \{ 0 \} ) )$. That is,
\begin{eqnarray*}
  \cdots & \rightarrow & \underbrace{H_2 (\Sigma' \times [0,1])}_{\cong
  0} \rightarrow H_2 ( \Sigma'   \times [0,1] ,
  (\overline{\boldsymbol{\alpha}} \times \{ 1 \})
  \cup ( \boldsymbol{\beta} \times \{ 0 \} ) ) \\
  & \rightarrow & H_1 ( ( \overline{ \boldsymbol{\alpha} } \times \{
  1 \} ) \cup ( \boldsymbol{ \beta } \times \{ 0 \} ) ) \rightarrow
  H_1 ( \Sigma' ).
\end{eqnarray*}
Thus, the periodic domain $\boldsymbol{\pi}_2 (\mathbf{x}, \mathbf{x}) \cong \textrm{ker}( H_1 ( \overline{ \boldsymbol{\alpha} } / \partial \overline{ \boldsymbol{\alpha} } ) \oplus H_1 ( \boldsymbol{ \beta } ) \rightarrow H_1 ( \overline{\Sigma} / \partial \overline{ \Sigma } ) )$. This isomorphism enables us to find periodic domains from a bordered Heegaard diagram by choosing combinations of $\overline{ \boldsymbol{\alpha} }$ and $\boldsymbol{ \beta }$ curves such that the sum of their image in $H_1 ( \overline{\Sigma} / \partial \overline{\Sigma} )$ equals zero. We briefly describe how to find the periodic domain from such combinations. Explicitly, first choose any orientation on the longitude $\alpha_1^{a,L}$ ($\alpha_1^{a,R}$, respectively). This induces the orientation of $\beta_1$ ($\beta_2$, respectively) as follows. For example, if the orientation of $\alpha_1^{a,L}$ is in a counterclockwise direction, then the orientation of $\beta_1$ is from right to left in the diagram. Then, we impose the coefficient zero to the outermost region that contains the framed arc. Starting from the outermost region, we give coefficients to regions adjacent to it according to the following rule. Suppose we have two adjacent regions $A$ and $B$ such that the coefficient of $A$ equals $l$ and the coefficient of $B$ is not determined. If we can reach region $B$ from region $A$ by crossing a curve of multiplicity $k$ from right to left (notion of ``left'' and ``right'' is justified since we have orientation of curves), we give the region $B$ coefficient $k+l$; otherwise we give coefficient $-k+l$. If we can give coefficients to all regions consistently in this way, then the orientations given to curves $\overline{\boldsymbol{\alpha}}$ and $\boldsymbol{\beta}$ is boundary in $H_1( \overline{\Sigma} / \partial \overline{\Sigma} )$. \\

Since there are two possible choices of orientations of longitudes up to sign, we find two generators of $\pi_2 (\mathbf{x},\mathbf{x})$. Then the periodic domains are

\begin{displaymath}
  Q_3 + Q_5 + \sum_{i=1}^{2n-3} (i+1)(P_i + R_i) + (n+2)(Q_1+Q_4) +
  (n+3)Q_2
\end{displaymath}

and

\begin{displaymath}
  Q_3 -Q_5 + \sum_{i=1}^{2n-3} \frac{(1+(-1)^i)}{2}(P_i - R_i) + Q_4 -
  Q_1.
\end{displaymath}

See also Figure~\ref{fig:periodicdomain}. \\

Thus, this diagram is provincially admissible; in fact, there is no provincial periodic domain here. \\

\textbf{Generators} According to the labeling given in the diagram, there are $2n^2+2n$ generators which are classified into 4 groups.

\begin{displaymath}
  \left\{ \begin{array}{llll}
    \mathbf{x_i y_j} & \textrm{where $i$ and $j$ have same parity} \\
    \mathbf{a y_i} & \textrm{where $i$ is even} \\
    \mathbf{x_i b} & \textrm{where $i$ is even} \\
    \mathbf{a y_i}, \mathbf{x_j b} & \textrm{where $i$ and $j$ are odd}\\
  \end{array} \right.
\end{displaymath}

From now on, we will disregard generators of the last group because of the following reason. The main purpose of the bordered Floer homology is to compute the Heegaard Floer homology of a three-manifold obtained by gluing along the boundaries of two three-manifolds with homeomorphic boundaries. In the link complement case, we glue the link complement and solid tori. Typically, a bordered Heegaard diagram of a solid torus is a genus one surface with a puncture, equipped with $\boldsymbol{\beta} = \{ \beta_1 \}$ and $\overline{ \boldsymbol {\alpha} } = \{ \alpha_1^a , \alpha_2^a \}$. In particular, these $\alpha_i^a$ arcs are glued to $\alpha_j^{a,L}$ or $\alpha_i^{a,R}$ of the doubly bordered diagram of the link complement, and every generator of the diagram of the solid torus is occupying exactly one $\alpha$ arc. Therefore, after pairing two diagrams of the solid tori to both sides of the diagram of the link complement, the generators of the last kind cannot appear in the generator set of the resulting diagram.\\

\begin{rem}
In \cite[Chapter 3]{LOT08}, they have decomposed strands algebra $\mathcal{A}(\mathcal{Z})$ into the direct sum of $\mathcal{A}(\mathcal{Z},i)$, $i \in \{ -1, 0, 1 \}$, based on the number of Reeb chords $k+i$, where $k$ is the genus of the boundary surface. Likewise, we can decompose $\widehat{CFDD} (\mathcal{H})$ as follows so that the idempotent acts nontrivially on respective summands. 
\begin{displaymath}
  \widehat{CFDD} (\mathcal{H}) = \bigoplus_{i=-1}^{1} \widehat{CFDD}
  (\mathcal{H}, i)
\end{displaymath}
where
\begin{itemize}
  \item $\widehat{CFDD} (\mathcal{H}, -1)$ consists of a generator that occupies $\alpha_1^{a,R}$ and $\alpha_2^{a,R}$;
  \item $\widehat{CFDD} (\mathcal{H}, +1)$ consists of a generator that occupies $\alpha_1^{a,L}$ and $\alpha_2^{a,L}$;
  \item $\widehat{CFDD} (\mathcal{H}, 0)$ consists of all other generators.
\end{itemize}
The first three groups of the generators belong to $\widehat{CFDD} ( \mathcal{H}, 0 )$, but the last group of generators does not. \\

Clearly, only the generators in summand $\widehat{CFDD} (\mathcal{H}, 0)$ have a nontrivial contribution to any tensor product with $\widehat{CFA}$ module, considering the only nontrivial algebra of $\widehat{CFA}$ and $\widehat{CFD}$ is $\mathcal{A} (\mathcal{Z}, 0)$. Moreover, since $\mathcal{A} (\mathcal{Z}, -1)$ and $\mathcal{A} (\mathcal{Z}, +1)$ are quasi-isomorphic to $\mathbb{F}_2$ \cite[Example 3.25]{LOT08}, any invertible bimodule over either one of these algebras is also quasi-isomorphic to $\mathbb{F}_2$,~\cite[Chapter 10]{LOT11}.
\end{rem}

From now on, we will be only focusing on the generators belonging to $\widehat{CFDD} (\mathcal{H}, 0)$. \\

The differential $\delta^1 : \mathfrak{S}(\mathcal{H}) \rightarrow \mathcal{A}(- \partial_L \overline{\Sigma}) \otimes \mathcal{A}(- \partial_R \overline{\Sigma}) \otimes \mathfrak{S}(\mathcal{H})$  maps a generator $\mathbf{x} \in \mathfrak{S}(\mathcal{H})$ to $\sum \rho_I \otimes \sigma_J \otimes \mathbf{y}$, where $I,J \in \{ \phi, 1, 2, 3, 12, 23, 123\}$. Here, $\rho_I$ means an algebra element that comes from the left boundary strands algebra and $\sigma_J$, the right strands algebra. To investigate $\delta^1$ actions on generators, it is convenient to classify the resulting terms by their strands algebra elements. \\

\textbf{Algebra element 1} We should find all provincial domains. We claim that only rectangular domains contribute to the differential $\delta^1$.

\begin{lem}
  Every non-rectangular domain with \emph{ind}$(B, \boldsymbol{\rho})=1$, its sequence of Reeb chords $\boldsymbol{\rho}$ is nonempty.
\end{lem}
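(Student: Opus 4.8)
The plan is to argue the contrapositive: if $\boldsymbol{\rho}$ is empty then $B$ is rectangular. When $\boldsymbol{\rho}=\emptyset$ we have $|\overrightarrow{\rho^L}|=|\overrightarrow{\rho^R}|=\iota(\overrightarrow{\rho^L})=\iota(\overrightarrow{\rho^R})=0$, so the index formula degenerates to $\mathrm{ind}(B)=e(B)+n_{\mathbf{x}}(B)+n_{\mathbf{y}}(B)$, the ordinary Maslov index of a provincial domain. Since a domain with no Reeb chords cannot touch $\partial\overline{\Sigma}$, its support avoids every region $Q_j$ adjacent to the boundary; hence $B$ is a positive (as it must be to contribute to $\delta^1$) linear combination of the provincial regions $R_i$ and $P_i$ only. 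It therefore suffices to show that every such $B$ with Maslov index one is a rectangle.

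First I would handle the case where $B$ is topologically a disk. The boundary of a disk domain alternates between $\boldsymbol{\alpha}$- and $\boldsymbol{\beta}$-arcs, so it has an even number of corners, and since a generator of $\widehat{CFDD}(\mathcal{H},0)$ occupies just two intersection points, two distinct generators differ in one or two of them; a direct look at Figure \ref{fig:generaldiagram} shows there are no bigons and that the only connected positive disk domains built from the $R_i$ and $P_i$ are single rectangles, their horizontal and vertical extensions (again rectangles), and the L-shaped domains obtained by extending in both directions at once. The first two families are exactly the rectangular domains. For the L-shaped domains --- and, more generally, for any disk domain that wraps enough to acquire higher local multiplicity --- a direct computation of $e(B)+n_{\mathbf{x}}(B)+n_{\mathbf{y}}(B)$, carried out in Figure \ref{fig:extended}, yields a Maslov index different from one, so these do not occur.

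It remains to exclude the non-disk provincial domains. If $B$ has genus two it must contain $Q_1+Q_2+Q_4$, and these regions meet the boundary, contradicting $\boldsymbol{\rho}=\emptyset$. The annular case is the substantive one, because an annular domain with four corners has $e(B)=-1$ and can satisfy $n_{\mathbf{x}}(B)+n_{\mathbf{y}}(B)=2$, hence Maslov index one; I would rule it out by observing that the provincial regions $R_i,P_i$ of Figure \ref{fig:generaldiagram} assemble into a simply connected strip that is closed off at both ends by the boundary-adjacent regions $Q_j$, so no connected positive domain supported on $\{R_i,P_i\}$ can be annular --- equivalently, any annular domain in this diagram must sweep over a handle region $Q_j$ and therefore carries Reeb chords. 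Assembling the three cases, a non-rectangular index-one domain is never provincial, so $\boldsymbol{\rho}$ is nonempty. The main obstacle is precisely this last point: extracting from the explicit diagram the fact that its provincial part supports no annulus, so that every annular (or higher-genus) index-one domain is forced along a handle and hence comes with a nonempty Reeb chord sequence.
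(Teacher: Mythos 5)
Your reduction to provincial domains supported on the $P_i$ and $R_i$, your use of the index computation of \ref{fig:extended} to kill the doubly-extended (L-shaped) disks, and your exclusion of genus-two domains via $Q_1+Q_2+Q_4$ all run parallel to the paper. But the annular case --- which you yourself flag as ``the main obstacle'' --- is exactly where your argument has a genuine gap. You dispose of it by asserting that the provincial regions of \ref{fig:generaldiagram} ``assemble into a simply connected strip,'' so that no positive domain supported on $\{P_i,R_i\}$ can be annular. This is not proved, and it is the entire content of the lemma: the paper does \emph{not} claim that provincial annuli are combinatorially impossible, and its proof is structured precisely so as not to need that claim. Moreover, even granting simple connectivity of the union $\bigcup P_i\cup\bigcup R_i$, the inference ``simply connected support $\Rightarrow$ no annular domain'' is too quick: a domain is a linear combination with multiplicities, so it can be annular by encircling omitted provincial regions or by carrying nonconstant multiplicity, and one must also rule out non-separating annuli running over a handle --- a live possibility here, since the longitudes wind around the handles (this winding is visible in the periodic domains, whose coefficients on $P_i+R_i$ grow with $i$). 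Ruling these out requires further diagram-specific input (e.g.\ that every $P_i$, $R_i$ abuts some $Q_j$, and that the strip does not close up), none of which appears in your sketch.

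The paper's route is different and avoids the nonexistence claim altogether: it classifies the possible distributions of $90^{\circ}$ and $270^{\circ}$ corners of a hypothetical provincial annulus (at most four corners, five cases), eliminates four of the five cases with the index formula $e(B)+n_{\mathbf{x}}(B)+n_{\mathbf{y}}(B)$, and eliminates the remaining case (one $270^{\circ}$ and three $90^{\circ}$ corners) by the holomorphic-involution criterion in the spirit of \cite[Lemma 9.4]{OZ02}: an index-one contribution would force one boundary circle of the annulus to consist of a single $\overline{\boldsymbol{\alpha}}$ segment and a single $\boldsymbol{\beta}$ segment meeting in two $90^{\circ}$ corners, and a check against \ref{fig:generaldiagram} shows no combination of the provincial regions produces such a boundary. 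So where you need a global topological fact about the provincial part of the diagram, the paper only needs a much weaker, targeted statement about one boundary configuration. To repair your proof you would either have to supply the missing verification (simple connectivity, narrowness of the strip, and exclusion of non-separating annuli), or fall back on a corner-count/involution argument of the paper's type.
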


\begin{proof}
Suppose there is a non-rectangular provincial domain (in this case, an annulus) that has a nontrivial contribution to differential $\delta^1$. Then, the number of the corners of the domain must be two. This claim is justified by considering the number of corners of different types. Since the number of corners of any domain should not exceed four, there are only 5 possibilities;
\begin{itemize}
  \item four $270^{\circ}$ corners
  \item four $90^{\circ}$ corners
  \item three $270^{\circ}$ corners and one $90^{\circ}$ corner
  \item one $270^{\circ}$ corner and three $90^{\circ}$ corners
  \item two $270^{\circ}$ corners and two $90^{\circ}$ corners.
\end{itemize}
Since the domain was assumed to be provincial, it must be a combination of the regions $P_1, \cdots, P_{2n-3}$ and $R_1, \cdots R_{2n-3}$. Considering the index formula $e(A) + n_{\mathbf{x}}(A) + n_{\mathbf{y}}(A)$, the indices of the first three cases cannot be one. Likewise, we can easily rule out the last case. The fourth case does not exist due to the following reason; since the shape of the domain is an annulus, the $270^{\circ}$ corner must be on the boundary of the domain. Then, the other boundary must have two $90^{\circ}$ corners. If not, i.e, if one boundary component has all three $90^{\circ}$ corners, then there cannot be a holomorphic involution interchanging inner and outer boundaries. See~\cite[Lemma 9.4]{OZ02}. Thus, one boundary has two $90^{\circ}$ corners and the other boundary has one $90^{\circ}$ corner and one $270^{\circ}$ corner. In particular, the boundary that has two $90^{\circ}$ corners should consist of one $\overline{ \boldsymbol{ \alpha } }$ curve and one $\boldsymbol{ \beta }$ curve, and the intersections have to be $90^{\circ}$. However, such a boundary cannot be obtained by any combination of the domains in Figure~\ref{fig:generaldiagram}.
\end{proof}

Therefore, $P_1, \cdots, P_{2n-3}$ and $R_1, \cdots R_{2n-3}$ are the only regions not adjacent to the boundaries, so rectangular domains obtained by these regions are the only provincial domains. Such combinations of the regions are written explicitly as shown below.

\begin{eqnarray*}
  & & P_i, \quad P_i + R_{i+1} + P_{i+2}, \cdots \\
  & & P_i + P_{i+1} + P_{i+2}, \quad P_i + \cdots + P_{i+4},\cdots, \quad P_1 + \cdots P_{2n-3}, \\
  & & R_i, \quad R_i + P_{i+1} + R_{i+2}, \cdots \\
  & & R_i + R_{i+1}, R_{i+2}, \cdots, \quad R_1 + \cdots + R_{2n-3}\\
\end{eqnarray*}

All of these domains are rectangular, so each of these domains contribute to the nontrivial differential with algebra element $1$. In terms of generators,

\begin{displaymath}
  \mathbf{x_i y_j} \mapsto
  \left\{
  \begin{array}{lllll}
    \mathbf{x_{j-1} y_{i+1}} + \mathbf{x_{i+1} y_{j-1}} \quad \textrm{if $j - i > 2$}
    \\[0.3pc]
    \mathbf{x_{j+1} y_{i-1}} + \mathbf{x_{i-1} y_{j+1}} \quad \textrm{if $i - j > 2$} \\[0.3pc]
    \mathbf{x_{i+1} y_{j-1}} \quad \textrm{if $j - i = 2$} \\[0.3pc]
    \mathbf{x_{i-1} y_{j+1}} \quad \textrm{if $i - j = 2$} \\[0.3pc]
    0 \quad \textrm{if $i=j$} \\[0.3pc]
  \end{array}
  \right.
\end{displaymath}
\\

\textbf{Algebra element $\rho_1$ and $\sigma_1$.} First, consider the algebra element $\rho_1$. Domain $Q_3$ is adjacent to the Reeb chords of algebra element $\rho_1$. Note that if the multiplicity of the domain $Q_3$ is greater than 1, then it cannot contribute to the nontrivial differential. (If so, then it will produce the algebra element $\rho_1 \cdot \rho_1$, which equals zero.) We list the possible domains that result in nontrivial differentials, as written below.
\begin{eqnarray*}
\quad & & Q_3, \quad Q_3 + P_1 + P_2, \quad Q_3 + P_1 + P_2 + P_3 + P_4, \cdots, \\
\quad & & Q_3 + R_1 + P_2, \quad Q_3 + R_1 + P_2 + R_3 + P_4 \cdots.
\end{eqnarray*}
All such domains are rectangular domains containing $Q_3$. These domains are all quadrilateral, and the dimension and the modulo two count of the moduli spaces are obvious. The differentials obtained from these domains are listed below.
\begin{displaymath}
  \mathbf{a y_{2k}} \mapsto
  \left\{ \begin{array}{ll}
  \rho_1 \otimes (\mathbf{x_1 y_{2k-1}} + \mathbf{x_{2k-1} y_1}) \quad \textrm{if
  $k \neq 1$} \\[0.3pc]
  \rho_1 \otimes \mathbf{x_1 y_1} \quad \textrm{otherwise.}
  \end{array}
  \right.
\end{displaymath}

Differentials involving $\sigma_1$ can be found in a parallel manner, by using the symmetry of the diagram.

\begin{displaymath}
  \mathbf{x_{2k} b} \mapsto
  \left\{ \begin{array}{ll}
  \sigma_1 \otimes (\mathbf{x_{2k-1} y_1} + \mathbf{x_1 y_{2k-1}}) \quad \textrm{if
  $k \neq 1$} \\[0.3pc]
  \sigma_1 \otimes \mathbf{x_1 y_1} \quad \textrm{otherwise.}
  \end{array}
  \right.
\end{displaymath}
\\

\textbf{Algebra element $\rho_3$ and $\sigma_3$.} Similarly, domains adjacent to $\rho_3$ are all listed
\begin{displaymath}
  Q_1, \quad Q_1 + R_{2n-3} + R_{2n-2}, \quad Q_1 + R_{2n-3} +
  R_{2n-4} + R_{2n-5} + R_{2n-6}, \cdots
\end{displaymath}
and,
\begin{displaymath}
  Q_1 + P_{2n-3} + R_{2n-4}, \quad Q_1 + P_{2n-3} + R_{2n-4} +
  P_{2n-5} + R_{2n-6}, \cdots
\end{displaymath}

Domains adjacent to $\sigma_3$ are similar. We get the differentials below.

\begin{displaymath}
  \mathbf{a y_{2k}} \mapsto
  \left\{ \begin{array}{ll}
  \rho_3 \otimes (\mathbf{x_{2k+1} y_{2n-1}} + \mathbf{x_{2n-1} y_{2k+1}}) \quad \textrm{if
  $k \neq n-1$} \\
  \rho_3 \otimes \mathbf{x_{2n-1} y_{2n-1}} \quad \textrm{otherwise.}
  \end{array}
  \right.
\end{displaymath}

\begin{displaymath}
  \mathbf{x_{2k} b} \mapsto
  \left\{ \begin{array}{ll}
  \sigma_3 \otimes (\mathbf{x_{2n-1} y_{2k+1}} + \mathbf{x_{2k+1} y_{2n-1}}) \quad \textrm{if
  $k \neq n-1$} \\
  \sigma_3 \otimes \mathbf{x_{2n-1} y_{2n-1}} \quad \textrm{otherwise.}
  \end{array}
  \right.
\end{displaymath}
\\

\textbf{Algebra element $\rho_2 \otimes \sigma_2$.} The domain $Q_2$ adjacent to $\rho_2$ is adjacent to $\sigma_2$ as well. So, this is the one and only domain where the algebra element $\rho_2 \otimes \sigma_2$ occurs. Thus, we have $\mathbf{x_{2n-1} y_{2n-1}} \mapsto \rho_2 \otimes \sigma_2 \otimes \mathbf{ab}$. \\

\textbf{Algebra element $\rho_3 \otimes \sigma_1$ and $\rho_1 \otimes \sigma_3$.} There are two domains which contribute to $\rho_3 \otimes \sigma_1$; those are $Q_1 + R_1 + R_2 + \cdots R_{2n-3} + Q_5$ and $Q_1 + P_1 + R_2 + P_3 + R_4 + \cdots R_{2n-4} + P_{2n-3} + Q_5$. This gives $\mathbf{ab} \mapsto \rho_3 \otimes \sigma_1 \otimes (\mathbf{x_1 y_{2n-1}} + \mathbf{x_{2n-1} y_1})$. Again, using the symmetry of the diagram, $\mathbf{ab} \mapsto \rho_1 \otimes \sigma_3 \otimes (\mathbf{x_1 y_{2n-1}} + \mathbf{x_{2n-1} y_1})$. \\

Now, we will work on differentials whose domains are non-rectangular. To find holomorphic curves of such domains, we will dualize $\widehat{CFDD}$ to $\widehat{CFAA}$ so that we can use the $\mathcal{A}_{\infty}$ structure of it and ensure the existence of holomorphic curves and their count (modulo two). \\

\textbf{Algebra element containing $\rho_{12}$.} To take advantage of the $\mathcal{A}_{\infty}$-structure of $\widehat{CFAA}$, the orientation of two boundaries of the Heegaard diagram has to be reversed. We let $\overline{\rho}_I$ denote (respectively, $\overline{\sigma}_I$) the algebra element of the left strands algebra $\mathcal{A}(\mathcal{Z})$ (respectively, the right strands algebra); that is, an orientation reversing diffeomorphism $R : - S^1 \backslash \{z\} \rightarrow S^1 \backslash \{z\}$ induces a map $R_* : \mathcal{A} ( - \mathcal{Z} ) \rightarrow \mathcal{A} ( \mathcal{Z} )$ that maps $R_*(\rho_1) = \overline{\rho}_3$, $R_*(\rho_2) = \overline{\rho}_2$, $R_*(\rho_3) = \overline{\rho}_1$, and so on. The right boundary is similar.\\

Returning to $\widehat{CFDD}$, the domains contributing to $\rho_{12}$ must contain $Q_2$ and $Q_3$. Clearly $Q_2 + Q_3$ has more than four corners, so we will consider $Q_2 + Q_3 + Q_4$ instead to get the domain of four corners. This domain possibly contributes to the differential from $\mathbf{x_{2n-2} y_2}$ to $\mathbf{x_1 y_1}$. The only possible Maslov index one interpretation is $\mathcal{M} ( \mathbf{x_{2n-2} y_2}, \mathbf{x_1 y_1} ; \overline{\rho}_{23}, \overline{\sigma}_{12})$ (there can be cuts between $\overline{\rho}_2$ and $\overline{\rho}_3$, and $\overline{\sigma}_1$ and $\overline{\sigma}_2$, but these cuts will increase the Maslov index by one). Under the interpretation, the domain is an annulus with one boundary consisting of two segments of $\boldsymbol{\alpha}$ curves and two segments of $\boldsymbol{\beta}$ curves, and another boundary consisting of $\boldsymbol{\alpha}$ curve only. In the sense of~\cite[Lemma 9.4]{OZ02}, such an annulus cannot allow a holomorphic involution that interchanges one boundary with another, carrying $\boldsymbol{\alpha}$ curves to $\boldsymbol{\alpha}$ curves and $\boldsymbol{\beta}$ curves to $\boldsymbol{\beta}$ curves. Thus, the moduli space $\mathcal{M} ( \mathbf{x_{2n-2} y_2}, \mathbf{x_1 y_1} ; \overline{\rho}_{23}, \overline{\sigma}_{12})$ cannot give a nontrivial differential. Domains such as $Q_2 + Q_3 + Q_4 + P_1 + P_2$ or $Q_2 + Q_3 + Q_4 + R_1 + P_2$ can be considered similarly to $Q_2 + Q_3 + Q_4$. In fact, they do not give the nontrivial differential as long as the shape of the domain is topologically equivalent to $Q_2 + Q_3 + Q_4$. \\

There are two domains possibly giving a nontrivial differential; they are $Q_2 + Q_3 + P_1 + \cdots + P_{2n-3} + Q_4$ and $Q_2 + Q_3 + R_1 + P_2 + \cdots + R_{2n-3} + Q_4$. We will consider the domain $Q_2 + Q_3 + P_1 + \cdots + P_{2n-3} + Q_4$ first. It has three interpretations. Each of the interpretations comes from the choice of cuts made on the boundary of the domain. Cuts are allowed where the domain has $270^{\circ}$ or $180^{\circ}$ corners, or a point on the boundary intersecting $\boldsymbol{\alpha}$ curve. Thus, the domain $Q_2 + Q_3 + P_1 + \cdots + P_{2n-3} + Q_4$ has two points that possibly allow cuts; a point between $\rho_1$ and $\rho_2$, and a point between $\sigma_2$ and $\sigma_3$. Of course, it may not have any cuts at all. We list the moduli spaces of these interpretations as below. 
\begin{itemize}
  \item $\mathcal{M} ( \mathbf{a y_{2n-1}}, \mathbf{ a y_1} ; \overline{\rho}_3, \overline{\rho}_2, \overline{\sigma}_{12} )$
  \item $\mathcal{M} ( \mathbf{x_{2n-1} y_{2n-1}}, \mathbf{x_{2n-1} y_1} ; \overline{\rho}_{23}, \overline{\sigma}_2, \overline{\sigma}_1 )$
  \item $\mathcal{M}( \mathbf{x_{2k-1} y_{2n-1} }, \mathbf{x_{2k-1} y_1 } ; \overline{\rho}_{23}, \overline{\sigma}_{12} )$
\end{itemize}
First, we will consider $\mathcal{M}( \mathbf{x_{2k-1} y_{2n-1} }, \mathbf{x_{2k-1} y_1 } ; \overline{\rho}_{23}, \overline{\sigma}_{12} )$.

\begin{lem}
\label{thm:nonex1}
Modulo two count of the moduli space $\mathcal{M}( \mathbf{x_{2k-1} y_{2n-1} }, \mathbf{x_{2k-1} y_1 } ; \overline{\rho}_{23}, \overline{\sigma}_{12} )$ equals zero.
\end{lem}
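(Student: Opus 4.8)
The plan is to show that the moduli space is in fact empty, using the same obstruction via \cite[Lemma 9.4]{OZ02} that ruled out $\mathcal{M}(\mathbf{x_{2n-2}y_2}, \mathbf{x_1 y_1}; \overline{\rho}_{23}, \overline{\sigma}_{12})$ in the preceding paragraph. First I would fix the underlying domain: the only $B \in \boldsymbol{\pi}_2(\mathbf{x_{2k-1}y_{2n-1}}, \mathbf{x_{2k-1}y_1})$ compatible with the uncut Reeb--chord data $(\overline{\rho}_{23}; \overline{\sigma}_{12})$ is $B = Q_2 + Q_3 + P_1 + \cdots + P_{2n-3} + Q_4$, taken with no cuts --- precisely the third of the three interpretations of that domain listed just above --- and one checks with the index formula of Section~\ref{sec:prelim} that $\textrm{ind}(B; \overline{\rho}_{23}, \overline{\sigma}_{12}) = 1$, so the moduli space is a priori a finite $0$-manifold rather than something of the wrong dimension.

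Next I would read off the topological type of $B$ under this interpretation. As with $Q_2 + Q_3 + Q_4$, one checks that $B$ is an annulus (in particular it contains neither $Q_1$ nor the configuration $Q_1 + Q_2 + Q_4$, so it is not one of the genus-two domains). I would then describe its two boundary circles: one of them is built only out of segments of $\boldsymbol{\alpha}$ curves --- it runs along the longitude carrying the inert coordinate $\mathbf{x_{2k-1}}$, with its two ends escaping to the two length-two Reeb chords --- while the other circle genuinely alternates between $\boldsymbol{\alpha}$- and $\boldsymbol{\beta}$-segments, since it must pass through the intersection points occupied by $\mathbf{y_{2n-1}}$ and $\mathbf{y_1}$.

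Once the topology is pinned down the argument closes exactly as before. A holomorphic representative of an annular domain would, in the sense of \cite[Lemma 9.4]{OZ02}, force a holomorphic involution of the source interchanging its two boundary components and carrying $\boldsymbol{\alpha}$-curves to $\boldsymbol{\alpha}$-curves and $\boldsymbol{\beta}$-curves to $\boldsymbol{\beta}$-curves; but one boundary component consists of $\boldsymbol{\alpha}$-arcs only and the other of a genuine mixture of $\boldsymbol{\alpha}$- and $\boldsymbol{\beta}$-arcs, so no such involution can exist. Hence $\mathcal{M}(\mathbf{x_{2k-1}y_{2n-1}}, \mathbf{x_{2k-1}y_1}; \overline{\rho}_{23}, \overline{\sigma}_{12}) = \emptyset$, and its mod-two count vanishes.

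The step I expect to be the main obstacle is the second one: correctly identifying $B$ as an annulus and, more delicately, pinning down which boundary circle is the pure-$\boldsymbol{\alpha}$ one and verifying the $\boldsymbol{\alpha}$/$\boldsymbol{\beta}$ decomposition uniformly in the odd index $2k-1$ and in $n$. I would also want to confirm that no degeneration into the ``cut'' interpretations --- the one with an extra cut between $\overline{\rho}_2$ and $\overline{\rho}_3$, or between $\overline{\sigma}_1$ and $\overline{\sigma}_2$ --- re-enters the count, but, as already noted, each such cut raises the Maslov index, so those configurations do not contribute at $\textrm{ind} = 1$. If the involution obstruction should fail in some degenerate low-$n$ case (for instance if $B$ collapses to a disk), the robust alternative is to dualize to $\widehat{CFAA}$ and deduce the vanishing of the corresponding $m_{i,1,j}$ from an $\mathcal{A}_\infty$ relation among the rectangular operations already computed.
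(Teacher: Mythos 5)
Your approach is genuinely different from the paper's (which proves this lemma purely algebraically, from the $\mathcal{A}_\infty$ compatibility relation for $m^2(\mathbf{x_{2k-1}y_{2n-1}}, \overline{\rho}_2, \overline{\rho}_3, \overline{\sigma}_{12})$ in $\widehat{CFAA}$), and it has a genuine gap: the ``no holomorphic involution, hence empty'' step is not justified once the static coordinate $\mathbf{x_{2k-1}}$ is taken into account. For generic odd $2k-1$ this point is not a $90^{\circ}$ corner of $B = Q_2+Q_3+P_1+\cdots+P_{2n-3}+Q_4$ but a $180^{\circ}$ point lying on a boundary arc of the domain, and the paper explicitly allows cuts at exactly such points; these boundary cuts are part of the moduli space for the \emph{same} data $(B;\overline{\rho}_{23},\overline{\sigma}_{12})$ and do not change the index. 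They are not the index-raising ``cuts'' you dismiss, which refer to splitting $\overline{\rho}_{23}$ (resp.\ $\overline{\sigma}_{12}$) into chords occurring at distinct heights. With a variable-length cut available, the involution obstruction evaporates: indeed the paper's Remark in Case 1 observes that an annulus with one boundary consisting only of $\boldsymbol{\alpha}$-segments, the other mixed, and a cut on the $\boldsymbol{\alpha}$-only boundary \emph{always} admits a holomorphic representative, because the cut length can be tuned until the involution exists. So if your description of the boundary circles were taken at face value (pure-$\boldsymbol{\alpha}$ boundary through $\mathbf{x_{2k-1}}$, ends at the Reeb chords), that Remark would give count one, contradicting the lemma; either way, your argument as written does not determine the count. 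That the same $2$-chain supports a moduli space of count one in its second interpretation (Lemma~\ref{thm:nonex2}) is further evidence that the parity here cannot be read off from the coarse $\boldsymbol{\alpha}/\boldsymbol{\beta}$ boundary pattern alone.

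The vanishing is therefore not an emptiness statement but a cancellation detected algebraically, which is why the paper dualizes to $\widehat{CFAA}$: in the relation for $m^2(\mathbf{x_{2k-1}y_{2n-1}}, \overline{\rho}_2, \overline{\rho}_3, \overline{\sigma}_{12})$ all terms other than $m(\mathbf{x_{2k-1}y_{2n-1}}, \overline{\rho}_{23}, \overline{\sigma}_{12})$ are shown to vanish --- by index reasons, by absence of suitable domains, and (for $m(m(\mathbf{x_{2k-1}y_{2n-1}}, \overline{\rho}_2, \overline{\sigma}_{12}), \overline{\rho}_3)$) by an involution argument applied to a \emph{different}, smaller annular domain where no cut is available. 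You mention dualizing to $\widehat{CFAA}$ only as a fallback for ``degenerate low-$n$ cases,'' but it is in fact the needed argument in all cases, and your proposal does not carry out the identification and elimination of these auxiliary terms, which is where the actual content of the proof lies.
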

\begin{proof}
We will compute the signed number of the moduli space by considering the following $\mathcal{A}_{\infty}$ compatibility condition.
\begin{eqnarray*}
  0 & = & m ( m ( \mathbf{x_{2k-1} y_{2n-1} } ), \overline{\rho}_2, \overline{\rho}_3, \overline{\sigma}_{12} ) +  m ( m ( \mathbf{ x_{2k-1} y_{2n-1} }, \overline{\rho}_2 ), \overline{\rho}_3, \overline{\sigma}_{12} ) \\
  & & + m ( m ( \mathbf{x_{2k-1} y_{2n-1} }, \overline{\sigma}_{12} ), \overline{\rho}_2, \overline{\rho}_3 ) + m ( \mathbf{x_{2k-1} y_{2n-1} } , \mu ( \overline{\rho}_2, \overline{\rho}_3 ) , \overline{\sigma}_{12} ) \\
  & & + m ( m ( \mathbf{ x_{2k-1} y_{2n-1} }, \overline{\rho}_2, \overline{\sigma}_{12} ),  \overline{\rho}_3 ) + m ( m ( \mathbf{x_{2k-1} y_{2n-1} }, \overline{\rho}_2, \overline{\rho}_3, \overline{\sigma}_{12} ) )
\end{eqnarray*}
The right hand side of the equation above consists of six terms. The second term vanishes because $m(\mathbf{x_{2k-1} y_{2n-1}}, \overline{\rho}_2)$ does not have the algebra element $\overline{\sigma}_2$ (note that domain $Q_2$ is adjacent to $\overline{\rho}_2$ and $\overline{\sigma}_2$). Similarly, the third term vanishes since $m(\mathbf{x_{2k-1} y_{2n-1}}, \overline{\sigma}_{12})$ has $\overline{\sigma}_{12}$ as its input but lacks $\overline{\rho}_2$. The last term also vanishes because the Maslov index is not one. Replacing $\mu (\overline{\rho}_2, \overline{\rho}_3) = \overline{\rho}_{23}$ and $m ( \mathbf{x_{2k-1} y_{2n-1} } ) = \mathbf{x_{2n-2} y_{2k}} + \mathbf{x_{2k} y_{2n-2} }$, the above equation is reduced as follows.
\begin{eqnarray*}
  0 & = & m ( \mathbf{x_{2n-2} y_{2k}}, \overline{\rho}_2, \overline{\rho}_3, \overline{\sigma}_{12} ) + m ( \mathbf{x_{2k} y_{2n-2}}, \overline{\rho}_2, \overline{\rho}_3, \overline{\sigma}_{12} ) \\
  & & + m ( \mathbf{x_{2k-1} y_{2n-1}} , \overline{\rho}_{23}, \overline{\sigma}_{12} ) + m ( m ( \mathbf{x_{2k-1} y_{2n-1}} , \overline{\rho}_2, \overline{\sigma}_{12} ), \overline{\rho}_3 ).
\end{eqnarray*}
The first term on the right hand side corresponds to the moduli space
\begin{displaymath}
\mathcal{M}(\mathbf{x_{2n-2} y_{2k}}, \mathbf{x_{2k-1} y_1} ; \overline{\rho}_2, \overline{\rho}_3, \overline{\sigma}_{12} ),
\end{displaymath}
whose Maslov index is not one. The second term also vanishes because any domain containing $Q_2 + Q_3 + Q_4$ cannot have corners that contain $\mathbf{x_{2k}}$ and $\mathbf{y_{2n-2}}$. The last term also vanishes because of the following reason; the moduli space $\mathcal{M} ( \mathbf{x_{2k-1} y_{2n-1} }, \mathbf{a y_{2k} }; \overline{\rho}_2, \overline{\sigma}_{12} )$ has no holomorphic representative since the domain is an annulus and does not allow holomorphic involution, so $m ( \mathbf{x_{2k-1} y_{2n-1}} , \overline{\rho}_2, \overline{\sigma}_{12} ) = 0$. Hence, $m (  \mathbf{x_{2k-1} y_{2n-1}} , \overline{\rho}_{23}, \overline{\sigma}_{12} ) = 0$ and $\sharp \mathcal{M} (\mathbf{ x_{2k-1} y_{2n-1} }, \mathbf{x_{2k-1} y_1} ; \overline{\rho}_{23}, \overline{\sigma}_{12} ) = 0$.
\end{proof}

The second interpretation is $\mathcal{M} ( \mathbf{x_{2n-1} y_{2n-1}}, \mathbf{x_{2n-1} y_1} ; \overline{\rho}_{23}, \overline{\sigma}_2, \overline{\sigma}_1 )$. The domain is an annulus; each boundary consists of one $\boldsymbol{\alpha}$ curve segment and one $\boldsymbol{\beta}$ curve segment. The modulo two count of the moduli space can be computed by a similar computation above.
\begin{lem}
\label{thm:nonex2}
Modulo two count of moduli space $\mathcal{M} ( \mathbf{x_{2n-1} y_{2n-1}}, \mathbf{x_{2n-1} y_1} ; \overline{\rho}_{23}, \overline{\sigma}_2, \overline{\sigma}_1 )$ is one.
\end{lem}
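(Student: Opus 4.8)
The plan is to pin down this count with the device already used in Lemma~\ref{thm:nonex1} — an $\mathcal{A}_\infty$ compatibility relation for $\widehat{CFAA}$ — but now with test data chosen so that the relation forces the count to be \emph{odd}. First I would apply the compatibility condition to the module element $\mathbf{x_{2n-1} y_{2n-1}}$ with left input sequence $\overline{\rho}_2, \overline{\rho}_3$ and right input sequence $\overline{\sigma}_2, \overline{\sigma}_1$. In this relation the only ``algebra'' term that survives is the one in which $\mu_2$ multiplies the two left chords, $\mu_2(\overline{\rho}_2, \overline{\rho}_3) = \overline{\rho}_{23}$, yielding $m(\mathbf{x_{2n-1} y_{2n-1}}, \overline{\rho}_{23}, \overline{\sigma}_2, \overline{\sigma}_1)$ — whose coefficient on $\mathbf{x_{2n-1} y_1}$ is exactly $\sharp\mathcal{M}(\mathbf{x_{2n-1} y_{2n-1}}, \mathbf{x_{2n-1} y_1}; \overline{\rho}_{23}, \overline{\sigma}_2, \overline{\sigma}_1)$. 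No other algebra term appears: the torus algebra has vanishing differential, and $\mu_2(\overline{\sigma}_2, \overline{\sigma}_1) = 0$ since $\overline{\sigma}_2$ and $\overline{\sigma}_1$ do not concatenate in that order.

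Everything else in the relation is a composite $m\circ m$, indexed by the $3\times 3$ ways of splitting $(\overline{\rho}_2, \overline{\rho}_3)$ and $(\overline{\sigma}_2, \overline{\sigma}_1)$ each into a prefix (absorbed by the inner $m$) and a suffix (absorbed by the outer $m$). I would show that eight of these nine composites vanish and that the ninth,
\begin{displaymath}
  m\big( m(\mathbf{x_{2n-1} y_{2n-1}}, \overline{\rho}_2, \overline{\sigma}_2),\ \overline{\rho}_3, \overline{\sigma}_1\big),
\end{displaymath}
equals $\mathbf{x_1 y_{2n-1}} + \mathbf{x_{2n-1} y_1}$. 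Its inner factor is $m(\mathbf{x_{2n-1} y_{2n-1}}, \overline{\rho}_2, \overline{\sigma}_2) = \mathbf{ab}$, realized by the region $Q_2$ (this is the length-two operation recorded earlier as $\mathbf{x_{2n-1} y_{2n-1}} \mapsto \rho_2\otimes\sigma_2\otimes\mathbf{ab}$), and its outer factor is $m(\mathbf{ab}, \overline{\rho}_3, \overline{\sigma}_1) = \mathbf{x_1 y_{2n-1}} + \mathbf{x_{2n-1} y_1}$, realized by the domains $Q_1 + R_1 + \cdots + R_{2n-3} + Q_5$ and $Q_1 + P_1 + R_2 + \cdots + P_{2n-3} + Q_5$ computed above. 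Extracting the $\mathbf{x_{2n-1} y_1}$-component of the whole relation then gives $\sharp\mathcal{M}(\mathbf{x_{2n-1} y_{2n-1}}, \mathbf{x_{2n-1} y_1}; \overline{\rho}_{23}, \overline{\sigma}_2, \overline{\sigma}_1) + 1 \equiv 0 \pmod 2$, so the count is one.

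The vanishing of the other eight composites follows by the same considerations as in Lemma~\ref{thm:nonex1}: the composite whose inner factor is $m_{0,1,0}(\mathbf{x_{2n-1} y_{2n-1}})$ dies since $\mathbf{x_i y_j}\mapsto 0$ when $i=j$; every composite whose inner or outer factor would require a domain out of $\mathbf{x_{2n-1} y_{2n-1}}$ carrying only $\overline{\rho}_2$, only $\overline{\sigma}_2$, or only $\overline{\sigma}_2$ followed by $\overline{\sigma}_1$ dies because $Q_2$ is the unique region meeting $\rho_2$ or $\sigma_2$ and necessarily supplies both, so such partial domains do not exist; and the remaining composites die from the index formula together with the observation that any surviving domain would be the annulus $Q_2 + Q_3 + P_1 + \cdots + P_{2n-3} + Q_4$ (or a horizontal/vertical extension of $Q_2 + Q_3 + Q_4$), which, as in \cite[Lemma 9.4]{OZ02}, admits no holomorphic involution interchanging its two boundary circles and preserving the $\boldsymbol{\alpha}$/$\boldsymbol{\beta}$ decomposition.

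The hard part will be exactly this case analysis: one must be sure the enumeration of composite terms is exhaustive and that each is matched with the correct domain (or shown to have none at all), since a single overlooked composite with a nonzero $\mathbf{x_{2n-1} y_1}$-coefficient would flip the parity of the answer. A secondary point needing care is the orientation-reversal bookkeeping, so as to be certain that the length-two $\widehat{CFAA}$ operations invoked above genuinely agree, in the bar variables, with the $\delta^1$ of $\widehat{CFDD}$ computed earlier, and that $\overline{\rho}_2\cdot\overline{\rho}_3 = \overline{\rho}_{23}$ is indeed the product entering the relation.
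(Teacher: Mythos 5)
Your proof is correct and is essentially the paper's own argument: the same $\mathcal{A}_\infty$ compatibility relation for $m^2(\mathbf{x_{2n-1} y_{2n-1}}, \overline{\rho}_2, \overline{\rho}_3, \overline{\sigma}_2, \overline{\sigma}_1)$, the same identification of $m\big(m(\mathbf{x_{2n-1} y_{2n-1}}, \overline{\rho}_2, \overline{\sigma}_2), \overline{\rho}_3, \overline{\sigma}_1\big) = \mathbf{x_1 y_{2n-1}} + \mathbf{x_{2n-1} y_1}$ as the only other nonvanishing term, and the same vanishing reasons for the remaining composites (no provincial differential out of $\mathbf{x_{2n-1} y_{2n-1}}$, the region $Q_2$ forcing $\overline{\rho}_2$ and $\overline{\sigma}_2$ to appear together, and the Maslov index). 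The only harmless slip is attributing the outer factor $m(\mathbf{ab}, \overline{\rho}_3, \overline{\sigma}_1)$ to the domains $Q_1 + R_1 + \cdots + R_{2n-3} + Q_5$ and $Q_1 + P_1 + \cdots + P_{2n-3} + Q_5$: since $\overline{\rho}_3$ and $\overline{\sigma}_1$ correspond under the orientation reversal to $\rho_1$ and $\sigma_3$, the operation is realized by the mirror-image domains (those of the $\rho_1 \otimes \sigma_3$ differential), which by the symmetry of the diagram give the same value $\mathbf{x_1 y_{2n-1}} + \mathbf{x_{2n-1} y_1}$, so the parity conclusion is unaffected.
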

\begin{proof}
Again, we will consider the $\mathcal{A}_{\infty}$-compatibility relation as below.
\begin{eqnarray*}
0 & = & m^2 ( \mathbf{x_{2n-1} y_{2n-1} }, \overline{\rho}_2,
\overline{\rho}_3, \overline{\sigma}_2, \overline{\sigma}_1 ) \\
& = & m ( m ( \mathbf{x_{2n-1} y_{2n-1} } ) , \overline{\rho}_2,
\overline{\rho}_3, \overline{\sigma}_2, \overline{\sigma}_1 ) + m (
\mathbf{x_{2n-1} y_{2n-1} }, \mu(\overline{\rho}_2,
\overline{\rho}_3), \overline{\sigma}_2, \overline{\sigma}_1 ) \\
& + & m ( m ( \mathbf{x_{2n-1} y_{2n-1}}, \overline{\rho}_2,
\overline{\sigma}_2 ) , \overline{\rho}_3, \overline{\sigma}_1 ) + m
( m (\mathbf{x_{2n-1} y_{2n-1}}, \overline{\rho}_2,
\overline{\rho}_3 ), \overline{\sigma}_2, \overline{\sigma}_1 ) \\
& + & m ( m (\mathbf{x_{2n-1} y_{2n-1}}, \overline{\sigma}_2,
\overline{\sigma}_1 ), \overline{\rho}_2, \overline{\rho}_3 ) + m (
m (\mathbf{x_{2n-1} y_{2n-1}}, \overline{\rho}_2, \overline{\rho}_3
, \overline{\sigma}_2 ), \overline{\sigma}_1 ) \\
& + & m ( m (\mathbf{x_{2n-1} y_{2n-1}}, \overline{\rho}_2,
\overline{\sigma}_2 , \overline{\sigma}_1 ), \overline{\rho}_3 ) + m
( m ( \mathbf{x_{2n-1} y_{2n-1} }, \overline{\rho}_2,
\overline{\rho}_3, \overline{\sigma}_2, \overline{\sigma}_1 ) )
\end{eqnarray*}
$m(\mathbf{x_{2n-1} y_{2n-1}})=0$ since there is no provincial domain connecting $\mathbf{x_{2n-1} y_{2n-1}}$, the first term on the right hand side vanishes. The fourth term also vanishes because $m( \mathbf{x_{2n-1} y_{2n-1}}, \overline{\rho}_2, \overline{\rho}_3 ) =0$ (domain $Q_2$ is adjacent to both
$\overline{\rho}_2$ and $\overline{\sigma}_2$). By the same reason, the fifth term vanishes.

$m(\mathbf{x_{2n-1} y_{2n-1}}, \overline{\rho}_2, \overline{\rho}_3, \overline{\sigma}_2 )$ in the sixth term does not represent a domain with four corners. Recall that a domain that involves $\overline{\rho}_2$ and $\overline{\rho}_3$ must have $\overline{\sigma}_1$. Thus, the sixth term vanishes. Similarly, the seventh term also vanishes. $m ( \mathbf{x_{2n-1} y_{2n-1} }, \overline{\rho}_2, \overline{\rho}_3, \overline{\sigma}_2, \overline{\sigma}_1 ) = 0$ when considering the Maslov index.

Then the above compatibility relation is reduced to,
\begin{displaymath}
  m ( \mathbf{x_{2n-1} y_{2n-1}}, \overline{\rho}_{23}, \overline{\sigma}_2, \overline{\sigma}_1 ) + m ( m ( \mathbf{x_{2n-1} y_{2n-1}}, \overline{\rho}_2, \overline{\sigma}_2 ), \overline{\rho}_3, \overline{\sigma}_1 ) = 0
\end{displaymath}
Note that the second term on the left hand side equals $\mathbf{x_{2n-1} y_1} + \mathbf{x_1 y_{2n-1}}$. This implies modulo two count of the moduli spaces $\mathcal{M} ( \mathbf{x_{2n-1} y_{2n-1}}, \mathbf{x_{2n-1} y_1} ; \overline{\rho}_{23}, \overline{\sigma}_2, \overline{\sigma}_1 )$ and $\mathcal{M} ( \mathbf{x_{2n-1} y_{2n-1}}, \mathbf{x_1 y_{2n-1}} ; \overline{\rho}_{23}, \overline{\sigma}_2, \overline{\sigma}_1 )$ equal one.
\end{proof}

However, idempotents of the type-$DD$ module prohibit a nontrivial differential from moduli spaces considered above.
Explicitly,
\begin{eqnarray*}
  \delta^1 (\mathbf{ x_{2n-1} y_{2n-1} } ) & = & \rho_{12} \otimes
  \sigma_{23} \otimes \mathbf{ x_{2n-1} y_1 } + \cdots \\
  & = & \rho_{12} \iota_{1} \otimes \sigma_{23} \otimes \mathbf{ x_{2n-1} y_1 } + \cdots =
  \rho_{12} \otimes \sigma_{23} \otimes \iota_{1} \mathbf{ x_{2n-1} y_1 } + \cdots
\end{eqnarray*}
Recall that $\iota_1 \mathbf{ x_{2n-1} y_1 } =0$ since $\mathbf{ x_{2n-1} y_1 }$ occupies $\alpha_1^{a,L}$ and the idempotent $\iota_1$ also occupies the same $\alpha$-arc. \\

The third interpretation is $\mathcal{M} ( \mathbf{a y_{2n-1}}, \mathbf{ a y_1} ; \overline{\rho}_3, \overline{\rho}_2, \overline{\sigma}_{12} )$. This is again an annulus and one of its boundaries has two $\boldsymbol{\alpha}$ curve segments and two $\boldsymbol{\beta}$ curve segments, thus it cannot give a nontrivial differential either. \\

Next, we will consider domain $Q_2 + Q_3 + R_1 + P_2 + \cdots + R_{2n-3} + Q_4$. Possible cuts may arise from a point between $\sigma_2$ and $\sigma_3$. The possible interpretations are
\begin{itemize}
  \item $\mathcal{M} ( \mathbf{x_{2n-1} y_{2n-1} }, \mathbf{x_1 y_{2n-1} } ; \overline{\rho}_{23}, \overline{\sigma}_2, \overline{\sigma}_1 )$
  \item $\mathcal{M} ( \mathbf{x_{2n-1} y_{2k-1} }, \mathbf{x_1 y_{2k-1} } ; \overline{\rho}_{23}, \overline{\sigma}_{12} )$
\end{itemize}
By the above lemma, the modulo two count of the first moduli space is one, but because of idempotents, it cannot give a nontrivial contribution to the differential. The second moduli space has modulo two count zero by a similar computation in Lemma~\ref{thm:nonex1} or Lemma~\ref{thm:nonex2}. \\

\textbf{Algebra element containing $\rho_{23}$.} Roughly speaking, the domains that possibly contribute to the algebra element $\rho_{23}$ is obtained by adding regions to the domain $Q_1 +Q_2$ so that the resulting domain has at most four corners. \\

We will consider these domains by classifying the domains into three cases. \\

\textbf{Case 1.} We will first consider the following annular domains.
\begin{eqnarray*}
  & & Q_1 + Q_2, \\
  & & Q_1 + Q_2 + Q_4 + R_{2n-3} + P_{2n-3} + R_{2n-4}, \\
  & & Q_1 + Q_2 + Q_4 + R_{2n-3} + P_{2n-3} + R_{2n-4} + \cdots + P_{2k-1} + R_{2k}, \\
  & & \qquad \vdots 
\end{eqnarray*}
Basically, these domains are obtained by adding even number of regions to the top and bottom of $Q_1 + Q_2$. \\

We will first consider domain $Q_1+Q_2$. The domain can be interpreted as $\mathcal{M}(\mathbf{a y_{2n-2}}, \mathbf{a b}; \overline{\rho}_{12}, \overline{\sigma}_2)$. Again, modulo two count of the moduli space can be computed by using $\mathcal{A}_{\infty}$-relation of $m^2(\mathbf{a y_{2n-2}}, \overline{\rho}_1, \overline{\rho}_2, \overline{\sigma}_2)$. Recall that $m (\mathbf{a y_{2n-2}}, \overline{\rho}_1) = \mathbf{x_{2n-1} y_{2n-1}}$ and $m (\mathbf{x_{2n-1} y_{2n-1}}, \overline{\rho}_2, \overline{\sigma}_2) = \mathbf{a b}$ since the associated domains are rectangular.
\begin{eqnarray*}
  0 & = & m( m(\mathbf{a y_{2n-2}}, \overline{\rho}_1), \overline{\rho}_2,
  \overline{\sigma}_2) + m(\mathbf{a y_{2n-2}}, (\overline{\rho}_1, \overline{\rho}_2),
  \overline{\sigma}_2) + m( m(\mathbf{a y_{2n-2}},  \overline{\sigma}_2),
  \overline{\rho}_1, \overline{\rho}_2)\\
  & = & \mathbf{a b} + m(\mathbf{a y_{2n-2}}, \overline{\rho}_{12},
  \overline{\sigma}_2) + m( m(\mathbf{a y_{2n-2}},  \overline{\sigma}_2),
  \overline{\rho}_1, \overline{\rho}_2)
\end{eqnarray*}
The last term on the right hand side equals zero because $m( \mathbf{a y_{2n-2}}, \overline{\sigma}_2 ) =0$ (domain $Q_2$ is adjacent to Reeb chords $\overline{\rho}_2$ and $\overline{\sigma}_2$). This implies $m(\mathbf{a y_{2n-2}}, \overline{\rho}_{12}, \overline{\sigma}_2) = \mathbf{a b}$, hence $\sharp \mathcal{M}(\mathbf{a y_{2n-2}}, \mathbf{a b}; \overline{\rho}_{12}, \overline{\sigma}_2) = 1$.

\begin{rem}
An annulus domain of such kind (i.e, an outside boundary consisting of both $\boldsymbol{\alpha}$ and $\boldsymbol{\beta}$ curves and an inside boundary of $\boldsymbol{\alpha}$ curve only, including a cut on the inside boundary) always admits a holomorphic representative; since we are free to choose the length of the cut starting from the point so that the annulus admits a biholomorphic involution of it, again in the sense of \cite[Lemma 9.4]{OZ02}.
\label{rem:involution}
\end{rem}

The moduli space $\mathcal{M}(\mathbf{a y_{2n-2}}, \mathbf{a b}; \overline{\rho}_{12}, \overline{\sigma}_2) = \mathcal{M}(\mathbf{a y_{2n-2}}, \mathbf{a b}; \rho_{23}, \sigma_2)$ corresponding to $\rho_{23} \otimes \sigma_2 \otimes \mathbf{ab}$ term occurs in $\delta^1(\mathbf{a y_{2n-2}})$ in $\widehat{CFDD}$. However, the right hand side is zero because of the idempotents. \\

Likewise, domains $Q_1 + Q_2 + Q_4 + R_{2n-3} + P_{2n-3} + R_{2n-2}$, $Q_1 + Q_2 + Q_4 + R_{2n-3} + P_{2n-3} + R_{2n-2} + \cdots + P_{2k-1} + R_{2k}$, $\cdots$ allow the following interpretations.
\begin{itemize}
  \item $\mathcal{M}(\mathbf{a y_{2j}},\mathbf{a y_{2j+2}};
  \overline{\rho}_{12}, \overline{\sigma}_2, \overline{\sigma}_1 )$
  \item $\mathcal{M} (\mathbf{a y_{2j}},\mathbf{a y_{2j+2}};
  \overline{\rho}_{12}, \overline{\sigma}_{12} )$
\end{itemize}
The signed number of the first interpretation $\mathcal{M}(\mathbf{a y_{2j}},\mathbf{a y_{2j+2}}; \overline{\rho}_{12}, \overline{\sigma}_2, \overline{\sigma}_1 )$ is one modulo two by similar reasons described in Remark~\ref{rem:involution}. These contribute to the differential between generators $\mathbf{a y_{2j}}$ and $\mathbf{a y_{2j+2}}$ with an algebra element containing $\rho_{23}$, but all going to zero because of idempotents as well as $\mathcal{M}(\mathbf{a b},\mathbf{a y_2}; \overline{\rho}_{12}, \overline{\sigma}_ 3, \overline{\sigma}_2, \overline{\sigma}_1 )$. \\

The signed number of the second interpretation $\mathcal{M} (\mathbf{a y_{2j}},\mathbf{a y_{2j+2}}; \overline{\rho}_{12}, \overline{\sigma}_{12} )$ is zero modulo two. It can be proved by considering the following $\mathcal{A}_{\infty}$-relation.
\begin{eqnarray*}
  0 & =& m(m(\mathbf{a y_{2j}}, \overline{\rho}_{12}, \overline{\sigma}_1,
  \overline{\sigma}_2)) + m( m(\mathbf{a y_{2j}},
  \overline{\sigma}_1 ), \overline{\rho}_{12}, \overline{\sigma}_2 )
  \\
  & +& m( m (\mathbf{a y_{2j}},
  \overline{\rho}_{12} ), \overline{\sigma}_1 , \overline{\sigma}_2 )
  + m( m (\mathbf{a y_{2j}},
  \overline{\rho}_{12} , \overline{\sigma}_1) , \overline{\sigma}_2 )
  + m( \mathbf{a y_{2j}},
  \overline{\rho}_{12} , (\overline{\sigma}_1 ,
  \overline{\sigma}_2))
\end{eqnarray*}
$m(\mathbf{a y_{2j}}, \overline{\rho}_{12}, \overline{\sigma}_1, \overline{\sigma}_2) = 0$ since Maslov index is not one. $m (\mathbf{a y_{2j}}, \overline{\rho}_{12} )$ and $m (\mathbf{a y_{2j}}, \overline{\rho}_{12} , \overline{\sigma}_1)$ equal zero because $\overline{\sigma}_2$ was not involved and there is no such domain corresponding to these interpretations. $m( \mathbf{a y_{2j}},  \overline{\sigma}_1 ) =0$ is clear from the diagram.  Thus, the last term $m( \mathbf{a y_{2j}}, \overline{\rho}_{12} , (\overline{\sigma}_1 , \overline{\sigma}_2) ) = m( \mathbf{a y_{2j}}, \overline{\rho}_{12}, \overline{\sigma}_{12} )$ equals zero, too. \\

\textbf{Case 2.} Next, we will consider the following domains.
\begin{eqnarray*}
  & & Q_1 + Q_2 + P_{2n-3} + R_{2n-4} + \cdots + R_{2k} + P_{2k-1}, \\
  & & Q_1 + Q_2 + P_{2n-3} + R_{2n-4} + \cdots + R_{2k} + P_{2k-1}  \\
  & & \quad + \ Q_4 + R_{2n-3} + P_{2n-4} + \cdots + P_{2l} + R_{2l-1}, \\
  & & Q_1 + Q_2 + P_{2n-3} + R_{2n-4} + \cdots + P_{2k+1} + R_{2k}  \\
  & & \quad + \ Q_4 + R_{2n-3} + P_{2n-4} + \cdots + R_{2l+1} + P_{2l}, \\
  & & Q_1 + Q_2 + P_{2n-3} + R_{2n-4} + \cdots + R_2 + P_1 + Q_5 \\
  & & \quad + \ Q_4 + R_{2n-3} + P_{2n-4} + \cdots + R_{2l+1} + P_{2l}.
\end{eqnarray*}

These domains are obtained by adding a topologically rectangular domain containing $Q_1 + Q_2$ and another rectangular domain containing $Q_4$. \\ 

The first domain can have a cut at a point between $\rho_2$ and $\rho_3$. The interpretation 
\begin{displaymath}
\mathcal{M} (\mathbf{x_{2k-1} y_{2n-1}} , \mathbf{x_{2k} b} ; \overline{\rho}_2, \overline{\rho}_1,
\overline{\sigma}_2 )
\end{displaymath}
is essentially a rectangle so modulo two count of the corresponding moduli space is one. The second domain can have cuts at two different points; a point between $\rho_2$ and $\rho_3$, and a point between $\sigma_2$ and $\sigma_3$. Considering the interpretation that has only one cut, the domain is an annulus with one of its boundary consisting of two $\boldsymbol{\alpha}$ curve segments and two $\boldsymbol{\beta}$ curve segments, which does not allow any holomorphic representative. If the interpretation has both of the cuts, then it is also a rectangular domain of the moduli space $\mathcal{M} ( \mathbf{x_{2k-1} y_{2l-1}}, \mathbf{x_{2k} y_{2l}} ; \overline{\rho}_2, \overline{\rho}_1, \overline{\sigma}_2, \overline{\sigma}_1 )$. Dualizing them, they yield a nontrivial differential of algebra elements $\rho_{23} \otimes \sigma_2$ and $\rho_{23} \otimes \sigma_{23}$ for the type-$D$ structure map $\delta^1$ in $\widehat{CFDD}$.
\begin{rem}
Both of the domains considered above have interpretations without any cut. However, those interpretations do not have a holomorphic representative. For example, modulo two count of the moduli space $\mathcal{M} ( \mathbf{x_{2k-1} y_{2l-1}}, \mathbf{x_{2k} y_{2l}} ; \overline{\rho}_{12}, \overline{\sigma}_{12} )$ equals zero by considering a similar $\mathcal{A}_{\infty}$ relation discussed in Lemma~\ref{thm:nonex1} and Lemma~\ref{thm:nonex2}.
\end{rem}
The third domain has almost the same interpretation; the only meaningful interpretation is
\begin{displaymath}
\mathcal{M} ( \mathbf{x_{2l} y_{2k}}, \mathbf{x_{2l+1} y_{2k+1}} ; \overline{\rho}_2, \overline{\rho}_1, \overline{\sigma}_2, \overline{\sigma}_1 ).
\end{displaymath}
Again, this interpretation is rectangular and modulo two count of the moduli space is one. \\

The last domain has two interpretations with Maslov index one. They are
\begin{displaymath}
\mathcal{M} ( \mathbf{x_{2l} b}, \mathbf{x_{2l+1} y_1} ; \overline{\rho}_2, \overline{\rho}_1, \overline{\sigma}_3, \overline{\sigma}_2, \overline{\sigma}_1 )
\end{displaymath} 
and
\begin{displaymath}
\mathcal{M} ( \mathbf{x_{2l} b}, \mathbf{x_{2l+1} y_1} ; \overline{\rho}_2, \overline{\rho}_1, \overline{\sigma}_{123} ).
\end{displaymath} 
The first interpretation is clearly a rectangle. However, the second one is topologically a punctured torus. To count the signed number of the moduli space, we investigate the $\mathcal{A}_{\infty}$-relation $m^2 ( \mathbf{ x_{2l} b }, \overline{\rho}_2, \overline{\rho}_1, \overline{\sigma}_{12}, \overline{\sigma}_3 ) = 0$. 
\begin{lem}
Modulo two count of the moduli space $\mathcal{M} ( \mathbf{x_{2l} b}, \mathbf{x_{2l+1} y_1} ; \overline{\rho}_2, \overline{\rho}_1, \overline{\sigma}_{123} )$ equals one.
\label{lem:puncturedtorus}
\end{lem}
\begin{proof}
Disregarding all terms that equal to zero, the relation is reduced to
\begin{displaymath}
m ( \mathbf{ x_{2l} b }, \overline{\rho}_2, \overline{\rho}_1, \overline{\sigma}_{123} ) + m ( m ( \mathbf{ x_{2l} b } , \overline{\rho}_2, \overline{\rho}_1, \overline{\sigma}_{12} ) , \overline{\sigma}_3 ) =0 .
\end{displaymath}
$m ( \mathbf{ x_{2l} b } , \overline{\rho}_2, \overline{\rho}_1, \overline{\sigma}_{12} ) = \mathbf{x_{2l +2} b} $ because the corresponding domain is an annulus as in Lemma~\ref{rem:involution}. Thus, the relation is reduced to
\begin{displaymath}
m ( \mathbf{ x_{2l} b }, \overline{\rho}_2, \overline{\rho}_1, \overline{\sigma}_{123} ) + m ( \mathbf{ x_{2l+2} b } , \overline{\sigma}_3 ) =0 .
\end{displaymath} 
The second term of the right hand side is clearly $\mathbf{x_{2l+1} y_1 } + \mathbf{x_1 y_{2l+1}}$. This implies modulo two count of the moduli space
\begin{displaymath}
\mathcal{M} ( \mathbf{x_{2l} b}, \mathbf{x_{2l+1} y_1} ; \overline{\rho}_2, \overline{\rho}_1, \overline{\sigma}_{123} )
\end{displaymath}
equals one. (Clearly this lemma also proves that modulo two count of 
\begin{displaymath}
\mathcal{M} ( \mathbf{x_{2l} b}, \mathbf{x_1 y_{2l+1} } ; \overline{\rho}_2, \overline{\rho}_1, \overline{\sigma}_{123} )
\end{displaymath}
equals one, too.)
\end{proof}
The two interpretations of the last domain result in the two same terms $\rho_{23} \otimes \sigma_{123} \otimes \mathbf{x_{2l+1} y_1}$ in $\widehat{CFDD}$ module; therefore they do not contribute to a nontrivial differential. \\

\begin{figure}
  \centering
  \includegraphics[width=1\textwidth]{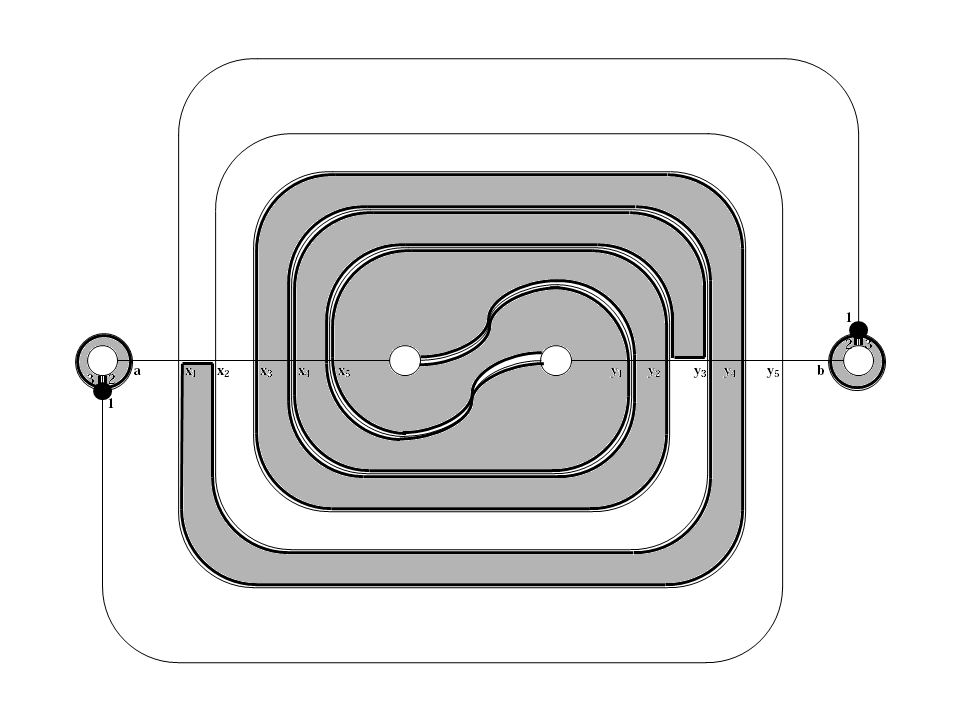}
  \caption{A diagram of (2,6) torus link complement. The shaded region is a domain obtained by adding a rectangular domains to $Q_2$. This domain corresponds to a differential from $\mathbf{x_1 y_3}$ to $\mathbf{x_2 y_2}$. Cutting along the bold curve on the boundary of the domain, the domain turns out to be rectangular.}
  \label{fig:26domain}
\end{figure}

\textbf{Case 3.} Domains that possibly contribute to a differential with an algebra element containing $\rho_{23}$ are obtained by adding domains to the top of $Q_1 + Q_2$. That is, we add $2j-1$ domains, $j=1, \cdots, n-1$ on the top and the resulting domain is $R_{2n-2j-1} + \cdots + R_{2n-3} + Q_1 + Q_2$. The only possible interpretation is $\mathcal{M} (\mathbf{x_{2n-1} y_{2n-2j-1}}, \mathbf{x_{2n-2j} b} ; \overline{\rho}_{12}, \overline{\sigma}_2 )$. It does not allow any holomorphic representative because the domain does not allow any holomorphic involution interchanging two boundaries. \\

Likewise, we shall consider domains obtained by adding domains to $Q_2$ on the top and bottom. Consider a domain
\begin{displaymath}
Q_1 + Q_2 + Q_4 + (R_{2n-k}+ \cdots R_{2n-3}) + (P_{2n-l} + \cdots + P_{2n-3}).
\end{displaymath}
The domain is obtained by adding $k-2$ domains on the top and $l-2$ domains on the bottom of $Q_1 +Q_2 + Q_4$ ($k$ and $l$ should have the same parity). If $k=l$, then the resulting domain is the domain that we have considered in Case 2. (See the bottom right of Figure~\ref{fig:cases}.) If $k \neq l$, then three interpretations are possible. The first is $\mathcal{M} (\mathbf{x_{2n-l-2} y_{2n-k-2}}, \mathbf{x_{2n-k-1} y_{2n-l-1}} ; \overline{\rho}_{12}, \overline{\sigma}_{12} )$. This is a genus two domain, and modulo two count of this moduli space is zero by a similar reason given in Lemma~\ref{thm:nonex1}. The second and third interpretations are 
\begin{displaymath}
\mathcal{M} (\mathbf{x_{2n-l-2} y_{2n-k-2}}, \mathbf{x_{2n-k-1} y_{2n-l-1}} ; \overline{\rho}_{12}, \overline{\sigma}_2, \overline{\sigma}_1 )
\end{displaymath}
and
\begin{displaymath}
\mathcal{M} (\mathbf{x_{2n-l-2} y_{2n-k-2}}, \mathbf{x_{2n-k-1} y_{2n-l-1}} ; \overline{\rho}_2, \overline{\rho}_1, \overline{\sigma}_{12} ).
\end{displaymath}
These are both annular interpretations, and they do not have any holomorphic representative because they do not allow a holomorphic involution. \\

Lastly, if $k=2n-2$, then, the domain contains $Q_5$. Then this domain has the following two interpretations.
\begin{itemize}
  \item $\mathcal{M} ( \mathbf{x_{2l-2} b}, \mathbf{x_1 y_{2l-1} } ; \overline{\rho}_2, \overline{\rho}_1, \overline{\sigma}_{123} )$
  \item $\mathcal{M} ( \mathbf{x_{2l-2} b}, \mathbf{x_1 y_{2l-1} } ; \overline{\rho}_2, \overline{\rho}_1, \overline{\sigma}_3 , \overline{\sigma}_{12} )$
\end{itemize}
The signed number of the moduli space of the first interpretation was proved to be one modulo two by Lemma~\ref{lem:puncturedtorus}. The signed number of the second interpretation is not one because it does not allow a holomorphic involution either. \\ 

To sum up, the differentials that give the algebra element containing $\rho_{23}$ are listed below.
\begin{itemize}
  \item $\mathbf{x_i y_j} \mapsto \rho_{23} \otimes \sigma_{23} \otimes \mathbf{x_{i+1} y_{j+1}} $ if $i, j \neq 2n-1$;
  \item $\mathbf{x_i y_{2n-1}} \mapsto \rho_{23} \otimes \sigma_2 \otimes \mathbf{x_{i+1} b}$ if $j =2n-1$ and $i = 1, 3, \cdots, 2n-3$;
  \item $\mathbf{x_i b} \mapsto \rho_{23} \otimes \sigma_{123} \otimes \mathbf{x_1 y_{i+1} }$.
\end{itemize}

\begin{figure}
  \centering
  \includegraphics[width=1\textwidth]{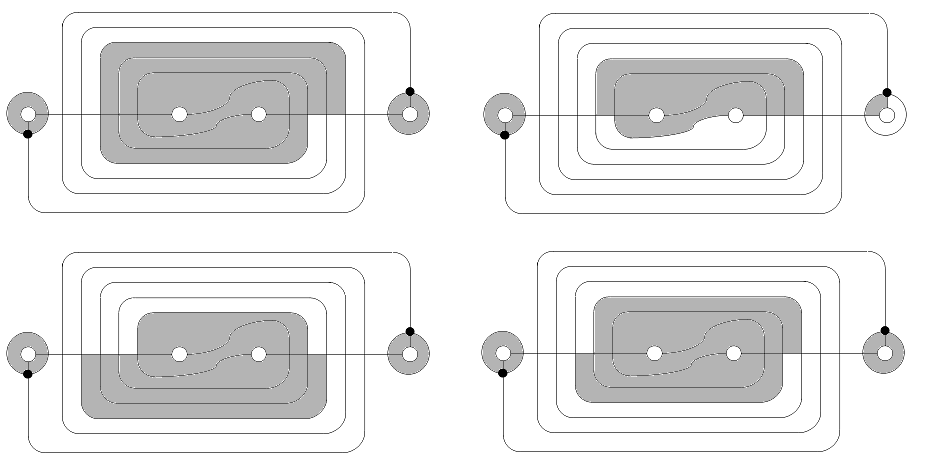}
  \caption{The above diagram shows examples of obtaining non-rectangular domains of $(2,6)$-torus link. Top left can be interpreted as an annular domain, but it cannot give a nontrivial differential due to idempotents. Top right is obtained by adding a domain to $Q_2$ on the top, but its only possible interpretation does not allow any holomorphic representative. Bottom left and bottom right are obtained by  adding domains to $Q_2$ on the top and bottom. If the number of regions attached on the top is not equal to the number of regions attached on the bottom, it has two interpretations; and they do not allow a holomorphic representative either (bottom left). If two numbers are equal, then the domain gives a nontrivial differential. (This case was previously considered. See Figure~\ref{fig:26domain}.) }
  \label{fig:cases}
\end{figure}

\textbf{Algebra element contains $\rho_{123}$.} Domains that possibly contribute to the algebra element $\rho_{123}$ are listed below.
\begin{eqnarray*}
  & & (Q_1 + Q_2 + Q_3 + Q_4 + R_{2n-3}) + R_1 + P_2 + R_3 + \cdots + P_{2n-4},  \\
  & & (Q_1 + Q_2 + Q_3 + Q_4) + P_1 + \cdots P_{2n-3}, \\[0.2cm]
  & & (Q_1 + Q_2 + Q_3 + Q_4 + R_{2n-3} + P_{2n-3} + R_{2n-4} + P_{2n-4} + R_{2n-5}) + R_1 + P_2 + R_3 + \cdots + P_{2n-6}, \\
  & & (Q_1 + Q_2 + Q_3 + Q_4 + R_{2n-3} + P_{2n-4} + R_{2n-4} + P_{2n-4} + R_{2n-5}) + P_1 +
  \cdots P_{2n-5}, \\[0.2cm]
  & & \quad \vdots \\
  & & Q_1 + \cdots + Q_5 + P_1 + \cdots + P_{2n-3} + R_1 + \cdots + R_{2n-3}. \\
\end{eqnarray*}

Each of these domains are obtained by adding a rectangular domain containing a region adjacent to $\rho_1$ to the annular domain listed in algebra element containing $\rho_{23}$. \\

We will investigate the first domain. As before, we will list all possible interpretations.
\begin{itemize}
  \item $\mathcal{M} ( \mathbf{a y_{2n-2}}, \mathbf{x_1 y_{2n-1}} ; \overline{\rho}_{123}, \overline{\sigma}_2,   \overline{\sigma}_1)$
  \item $\mathcal{M} ( \mathbf{a y_{2n-2}}, \mathbf{x_1 y_{2n-1}} ; \overline{\rho}_3, \overline{\rho}_2, \overline{\rho}_1, \overline{\sigma}_2, \overline{\sigma}_1 )$
  \item $\mathcal{M} ( \mathbf{a y_{2n-2}}, \mathbf{x_1 y_{2n-1}} ; \overline{\rho}_{23}, \overline{\rho}_1, \overline{\sigma}_2, \overline{\sigma}_1 )$
  \item $\mathcal{M} ( \mathbf{a y_{2n-2}}, \mathbf{x_1 y_{2n-1}} ; \overline{\rho}_3, \overline{\rho}_{12}, \overline{\sigma}_2, \overline{\sigma}_1 )$
\end{itemize}
The third interpretation is an annulus whose outer boundary has two $\boldsymbol{\alpha}$ curve segments and two $\boldsymbol{\beta}$ curve segments; thus it does not have a holomorphic representative. The fourth interpretation cannot give a nontrivial contribution either because of the $\mathcal{A}_{\infty}$-module compatibility relation. On the other hand, the second interpretation is a rectangular one; it allows a holomorphic representative and its modulo two count of the moduli space is one. The first interpretation also has a moduli space with modulo two count one by the same analysis in Lemma~\ref{lem:puncturedtorus}. Again, the first and second interpretations will result in the same term after dualizing to $\widehat{CFDD}$. The sum of these two terms equals zero, so this domain actually has no contribution after all. \\

The second domain has two interpretations; $\mathcal{M}( \mathbf{a y_{2n-2}}, \mathbf{x_{2n-1} y_1} ; \overline{\rho}_{123}, \overline{\sigma}_2, \overline{\sigma}_1 )$ and $\mathcal{M}( \mathbf{a y_{2n-2}}, \mathbf{x_{2n-1} y_1} ; \overline{\rho}_3, \overline{\rho}_{12}, \overline{\sigma}_2, \overline{\sigma}_1 )$. The first interpretation was considered in the above computation, and the second interpretation is an annulus whose outer boundary consists of two $\boldsymbol{\alpha}$ curve segments and two $\boldsymbol{\beta}$ curve segments, so there is no holomorphic representative. \\

Similarly, the other domains (except for the last domain) give Whitney disks, and the moduli spaces corresponding to the domains are $\mathcal{M} (\mathbf{a y_{2j}}, \mathbf{x_1 y_{2j+1}} ; \overline{\rho}_{123}, \overline{\sigma}_2, \overline{\sigma}_1 )$, $\mathcal{M} (\mathbf{a y_{2j}}, \mathbf{x_1 y_{2j+1}} ; \overline{\rho}_3, \overline{\rho}_2, \overline{\rho}_1, \overline{\sigma}_2, \overline{\sigma}_1 )$ and $\mathcal{M} (\mathbf{a y_{2j}}, \mathbf{x_{2j+1} y_1} ; \overline{\rho}_{123}, \overline{\sigma}_2, \overline{\sigma}_1 )$. The signed number of each of these moduli spaces is one modulo two. \\

The moduli space of the last domain $Q_1 + \cdots + Q_5 + P_1 + \cdots + P_{2n-3} + Q_1 + \cdots + Q_{2n-3}$ can be interpreted in four ways. The first is $\mathcal{M} (\mathbf{a b, x_1 y_1}; \overline{\rho}_{123}, \overline{\sigma}_{123} )$ whose Maslov index is different from one. The second possible interpretation is
\begin{displaymath}
  \mathcal{M} ( \mathbf{ab}, \mathbf{x_1 y_1} ; \overline{\rho}_{123}, \overline{\sigma}_3, \overline{\sigma}_2, \overline{\sigma}_1 )
\end{displaymath}
$\mathcal{A}_{\infty}$-relation of $m^2(\mathbf{ab}, \overline{\rho}_{12}, \overline{\rho}_3, \overline{\sigma}_3, \overline{\sigma}_2, \overline{\sigma}_1 )$ gives $m( \mathbf{ab}, \overline{\rho}_{123}, \overline{\sigma}_3, \overline{\sigma}_2, \overline{\sigma}_1 ) = \mathbf{x_1 y_1}$, by considering $m(\mathbf{ab}, \overline{\rho}_{12}, \overline{\sigma}_3, \overline{\sigma}_2, \overline{\sigma}_1 ) = \mathbf{a y_2}$ and $m( \mathbf{a y_2}, \overline{\rho}_3 ) = \mathbf{x_1 y_1}$. Thus, the modulo two count of the moduli space is one. The third interpretation
\begin{displaymath}
\mathcal{M} ( \mathbf{ab}, \mathbf{x_1 y_1} ; \overline{\rho}_3, \overline{\rho}_2, \overline{\rho}_1, \overline{\sigma}_{123} )
\end{displaymath}
can be done precisely in the same way. The last interpretation is 
\begin{displaymath}
  \mathcal{M} (\mathbf{a b, x_1 y_1}; \overline{\rho}_3,
  \overline{\rho}_2, \overline{\rho}_1, \overline{\sigma}_3,
  \overline{\sigma}_2, \overline{\sigma}_1 )
\end{displaymath}
The existence of a holomorphic curve and its modulo two count is quite clear from the diagram; the domain is essentially rectangular in this interpretation. \\

It is worth mentioning that there are three moduli spaces contributing to $\rho_{123} \sigma_{123} \otimes \mathbf{x_1 y_1}$ term in $\delta^1(\mathbf{ab})$.  \\

To sum up, we have the following nontrivial differentials of the algebra element containing $\rho_{123}$.
\begin{itemize}
  \item $\mathbf{a y_{2k} } \mapsto \rho_{123} \otimes \sigma_{23} \otimes \mathbf{x_{2k+1} y_1 } $;
  \item $\mathbf{a b} \mapsto \rho_{123} \otimes \sigma_{123} \otimes \mathbf{x_1 y_1} $.
\end{itemize}

For the algebra elements containing $\sigma_{12}$, $\sigma_{23}$ and $\sigma_{123}$, those differentials can be computed in a parallel manner by taking advantage of the symmetry of the diagram. \\

We close this section by summarizing the computation to the following proposition.

\begin{prop}
Let $\mathcal{H}$ be a bordered Heegaard diagram of $(2,2n)$-torus link complement in $S^3$ as in Figure~\ref{fig:generaldiagram}. Then, $\widehat{CFDD} ( \mathcal{H} ,0 )$ has the following generators.
\begin{itemize}
  \item $\mathbf{x_i y_j}$, where $1 \leq i,j \leq 2n-1$ and $i = j$ modulo two;
  \item $\mathbf{ab}$;
  \item $\mathbf{a y_k}$, where $k = 2 , 4, \cdots, 2n-2$;
  \item $\mathbf{x_k b}$, where $k = 2, 4, \cdots 2n-2$.
\end{itemize}
Then, the map $\delta^1 : \mathfrak{S} (\mathcal{H} ) \rightarrow \mathcal{A}(- \mathcal{Z}_L) \otimes \mathcal{A}( - \mathcal{Z}_R ) \otimes \mathfrak{S} (\mathcal{H})$ is computed in the following way.
\begin{itemize}
  \item For $\mathbf{x_i y_j}$, if $i, j \neq 2n-1$,
  \begin{displaymath}
  \mathbf{x_i y_j} \mapsto
  \left\{
  \begin{array}{lllll}
    \mathbf{x_{j-1} y_{i+1}} + \mathbf{x_{i+1} y_{j-1}} + \rho_{23} \sigma_{23} \mathbf{x_{i+1} y_{j+1}} \quad \textrm{if $j - i > 2$} \\[0.3pc]
    \mathbf{x_{j+1} y_{i-1}} + \mathbf{x_{i-1} y_{j+1}} + \rho_{23} \sigma_{23} \mathbf{x_{i+1} y_{j+1}} \quad \textrm{if $i - j > 2$} \\[0.3pc]
    \mathbf{x_{i+1} y_{j-1}} + \rho_{23} \sigma_{23} \mathbf{x_{i+1} y_{j+1}} \quad \textrm{if $j - i = 2$} \\[0.3pc]
    \mathbf{x_{i-1} y_{j+1}} + \rho_{23} \sigma_{23} \mathbf{x_{i+1} y_{j+1}} \quad \textrm{if $i - j = 2$} \\[0.3pc]
    \rho_{23} \sigma_{23} \mathbf{x_{i+1} y_{j+1}} \quad \textrm{if $i=j$}. 
  \end{array}
  \right. 
  \end{displaymath}    
  \item If $j =2n-1$ and $i = 1, 3, \cdots, 2n-3$,
  \begin{displaymath}
  \mathbf{x_i y_j} \mapsto
  \left\{
  \begin{array}{ll}
    \mathbf{x_{j-1} y_{i+1}} + \mathbf{x_{i+1} y_{j-1}} + \rho_{23} \sigma_2 \mathbf{x_{i+1} b} \quad \textrm{if $j - i > 2$} \\[0.3pc]
    \mathbf{x_{i+1} y_{j-1}} + \rho_{23} \sigma_2 \mathbf{x_{i+1} b} \quad \textrm{if $i = 2n-3$}.
  \end{array}
  \right. 
  \end{displaymath}
  \item If $i=2n-1$ and $j = 1, 3, \cdots, 2n-3$,
  \begin{displaymath}
  \mathbf{x_i y_j} \mapsto
  \left\{
  \begin{array}{ll}
    \mathbf{x_{j+1} y_{i-1}} + \mathbf{x_{i-1} y_{j+1}} + \rho_2 \sigma_{23} \mathbf{a y_{j+1}} \quad \textrm{if $i - j > 2$} \\[0.3pc]
    \mathbf{x_{j+1} y_{i-1}} + \rho_2 \sigma_{23} \mathbf{a y_{j+1}} \quad \textrm{if $j = 2n-3$}.
  \end{array}
  \right.   
  \end{displaymath}
  \item $\mathbf{x_{2n-1} y_{2n-1}} \mapsto \rho_2 \sigma_2 \mathbf{a b} $.
  \item For $\mathbf{a y_j}$, 
  \begin{displaymath}
  \mathbf{a y_j} \mapsto
  \left\{
  \begin{array}{lll}
    \rho_1 \mathbf{x_1 y_1 }  + \rho_3 ( \mathbf{x_{2n-1} y_3 } + \mathbf{x_3 y_{2n-1}} ) + \rho_{123} \sigma_{23} \mathbf{x_3 y_1} \quad \textrm{if $j=2$} \\[0.3pc]
    \rho_1 ( \mathbf{x_1 y_{2n-3} } + \mathbf{x_{2n-3} y_1} ) + \rho_3 \mathbf{x_{2n-1} y_{2n-1} } + \rho_{123} \sigma_{23} \mathbf{x_{2n-1} y_1} \quad \textrm{if $j=2n-2$} \\[0.3pc]
    \rho_1 ( \mathbf{x_1 y_{j-1} } + \mathbf{x_{j-1} y_1} ) + \rho_3 ( \mathbf{x_{2n-1} y_{j+1} } + \mathbf{x_{j+1} y_{2n-1}} ) + \rho_{123} \sigma_{23} \mathbf{x_{j+1} y_1} \quad \textrm{otherwise.}
  \end{array}
  \right.
  \end{displaymath}
  \item For $\mathbf{x_i b}$,
  \begin{displaymath}
  \mathbf{x_i b} \mapsto
  \left\{
  \begin{array}{lll}
    \sigma_1 \mathbf{x_1 y_1 }  + \sigma_3 ( \mathbf{x_3 y_{2n-1} } + \mathbf{x_{2n-1} y_3} ) + \rho_{23} \sigma_{123} \mathbf{x_1 y_3} \quad \textrm{if $i=2$} \\[0.3pc]
    \sigma_1 ( \mathbf{x_{2n-3} y_1 } + \mathbf{x_1 y_{2n-3}} ) + \sigma_3 \mathbf{x_{2n-1} y_{2n-1} } + \rho_{23} \sigma_{123} \mathbf{x_1 y_{2n-1}} \quad \textrm{if $i=2n-2$} \\[0.3pc]
    \sigma_1 ( \mathbf{x_{i-1} y_1 } + \mathbf{x_1 y_{i-1}} ) + \sigma_3 ( \mathbf{x_{i+1} y_{2n-1} } + \mathbf{x_{2n-1} y_{i+1}} ) + \rho_{23} \sigma_{123} \mathbf{x_1 y_{i+1}} \quad \textrm{otherwise.}
  \end{array}
  \right.
  \end{displaymath}
  \item $\mathbf{a b} \mapsto (\rho_1 \sigma_3 + \rho_3 \sigma_1)(\mathbf{x_1 y_{2n-1}} + \mathbf{x_{2n-1} y_1} ) + \rho_{123} \sigma_{123} \mathbf{x_1 y_1}$.
\end{itemize}
\label{thm:main}
\end{prop}

\begin{figure}
  \centering
  \includegraphics[width=1.1\textwidth]{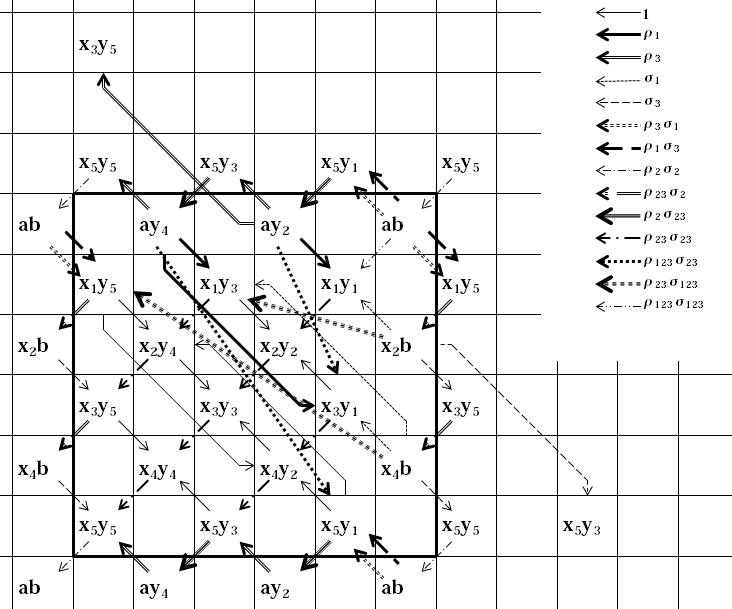}
  \caption{A diagram of $(2,6)$-torus link complement with all the differentials included.}
  \label{fig:26differential}
\end{figure}

\section{Examples}
\label{sec:example}

In this section, we will relate our result to the known calculation for knot complements and closed 3-manifolds. These examples show how to use the algebraic structure of the pairing theorem given by Lipshitz, Ozsv\'ath and Thurston in~\cite{LOT08}.

\subsection{Derived tensor product of bimodule}

The pairing of modules associated to a single boundary case is well studied by Lipshitz, Ozsv\'ath and Thurston in~\cite{LOT08}. In this section, we will be using the pairing theorem of doubly bordered cases. There are many versions of the pairing theorem depending on the types of bimodules~\cite[Theorem 2]{LOT11}, but for our purpose, the pairing of a type-$A$ module and type-$DD$ module will suffice. \\

The pairing of a doubly bordered case is also similar; the only difference is the framed arc $\mathbf{z}$. If we glue a doubly bordered diagram and a single boundary diagram together, we match the marked point $z$ from the single boundary diagram with one end of the framed arc $\mathbf{z}$. After pairing, the framed arc is reduced to a marked point on the other side of the boundary (when pairing two doubly bordered diagrams, then we connect the two framed arcs). In our example, we will be mainly interested in a type-$D$ structure obtained by the box tensor product $\widehat{CFA}( \mathcal{H}_1 ) \boxtimes \widehat{CFDD} ( \mathcal{H}_2 )$, where a single boundary diagram $\mathcal{H}_1$ is glued on the right side of a doubly bordered diagram $\mathcal{H}_2$. The resulting type-$D$ structure map $(\delta')^1$ is

\begin{displaymath}
  (\delta')^1 = \sum_{k=1}^{\infty} ((m_R)_{k+1} \otimes \mu_L   \otimes \mathbb{I}_{\widehat{CFDD}} ) (\mathbf{x} \otimes \delta^k (\mathbf{y}) ) 
\end{displaymath}
where $\mathbf{x} \in \mathfrak{S}(\mathcal{H}_1)$ and $\mathbf{y} \in \mathfrak{S}(\mathcal{H}_2)$. \\

\subsection{Infinity-surgery on right component of link}

First, we will consider an $\infty$-surgery on the right component of $(2,2n)$-torus link complement. Since the longitudes $\alpha_1^{a,L}$ and $\alpha_1^{a,R}$ of the left and right components are passing through $\beta_1$ and $\beta_2$ respectively, the $\infty$-surgery on the right components gives an unknot complement with framing $(n-1)$. We compute $\widehat{CFD}$ of the unknot complement in the following way. \\

Let $\mathcal{H}_{(2,2n)}$ be a doubly bordered diagram of $(2,2n)$ torus link complement, and $\mathcal{H}_{\infty}$ be a single bordered diagram of a solid torus. Then, the generator set $\mathfrak{S}(\mathcal{H}_{\infty} \cup_{\partial} \mathcal{H}_{(2,2n)} )$ consists of $\mathbf{w} \otimes \mathbf{a b}$ and $\mathbf{w} \otimes \mathbf{x_{2k} b}$, $k=1, \cdots, n-1$. \\

Computing $\widehat{CFA}(\mathcal{H}_{\infty})$ is easy; that is,
\begin{displaymath}
m_{k+3} (\mathbf{w}, \sigma_3, \underbrace{\sigma_{23}, \cdots, \sigma_{23}}_{k \textrm{-times}}, \sigma_2 ) = \mathbf{w}
\end{displaymath}

Now, we shall consider the type-$D$ structure of $\widehat{CFDD}(\mathcal{H}_{(2,2n)})$. We omit the terms which do not appear after taking box tensor product with $\widehat{CFA}(\mathcal{H}_{\infty})$; thus, they have no contribution in computing $\widehat{CFA}(\mathcal{H}_{\infty}) \boxtimes \widehat{CFDD}(\mathcal{H}_{(2,2n)})$.
\begin{eqnarray*}
  \delta^2 ( \mathbf{a b} ) & = & (\rho_1 \otimes \rho_{23}) \otimes (\sigma_3
  \otimes \sigma_2) \otimes \mathbf{x_2 b} + \cdots \\
  \delta^2 ( \mathbf{x_{2k} b} ) & = & (\rho_{23}) \otimes (\sigma_3
  \otimes \sigma_2) \otimes \mathbf{x_{2k+2} b} + \cdots \quad
  \textrm{for $k = 1, \cdots n-2$} \\
  \delta^2 ( \mathbf{x_{2n-2} b} ) & = & (\rho_2) \otimes (\sigma_3
  \otimes \sigma_2) \otimes \mathbf{a b} + \cdots
\end{eqnarray*}

Thus, type-$D$ structure $(\delta')^1$ is
\begin{eqnarray*}
  (\delta')^1 ( \mathbf{w} \otimes \mathbf{a b} ) & = & \mu(\rho_1 \otimes \rho_{23})
  \otimes m_3(\mathbf{w},\sigma_3, \sigma_2) \otimes \mathbf{x_2 b} \\
  & = & \rho_{123} \otimes \mathbf{w} \otimes \mathbf{x_2 b} \\
  (\delta')^1 ( \mathbf{w} \otimes \mathbf{x_{2k} b} ) & = & \mu(\rho_{23}) \otimes m_3(\mathbf{w}, \sigma_3, \sigma_2)
  \otimes \mathbf{x_{2k+2} b} \\
  & = & \rho_{23} \otimes \mathbf{w} \otimes \mathbf{x_{2k+2} b} \quad \textrm{for $k = 1, \cdots n-2$} \\
  (\delta')^1 ( \mathbf{w} \otimes \mathbf{x_{2n-2} b} ) & = & \mu(\rho_2) \otimes m_3(\mathbf{w}, \sigma_3, \sigma_2)
  \otimes \mathbf{a b} \\
  & = & \rho_2 \otimes \mathbf{w} \otimes \mathbf{a b}
\end{eqnarray*}
Compare this result with \cite[Example 2.2]{Hom}. \\

\begin{figure}
  \centering
  \includegraphics[width=1\textwidth]{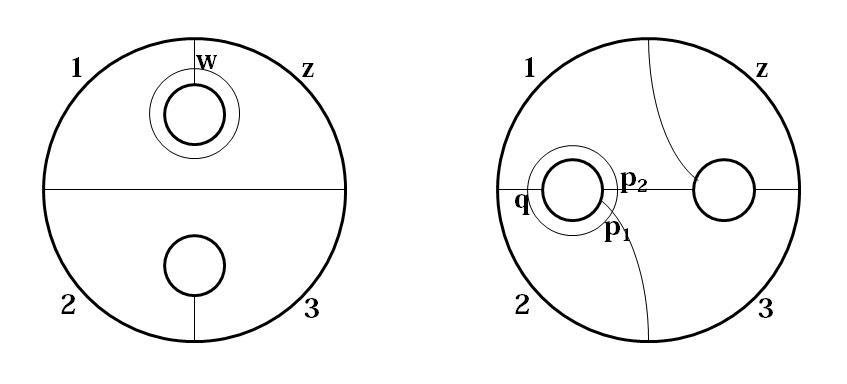}
  \caption{The diagram $\mathcal{H}_{\infty}$ on the left shows $\infty$-surgery on the right component of the link. The diagram $\mathcal{H}_{+2}$ on the right is $+2$-surgery on the right component. The $\mathcal{A}_{\infty}$ relation of $\widehat{CFA}(\mathcal{H}_{+2})$ is given as $m (q, \sigma_2) = p_1$, $m (p_1, \sigma_3, \sigma_2) = p_2$, and $m (p_2, \sigma_3, \sigma_2, \sigma_1 ) = q$. }
  \label{fig:surgery}
\end{figure}

\subsection{Knot complement of trefoil}

Consider $(2,4)$-torus link complement. If we glue the right component with a solid torus of framing $+2$, then the resulting manifold will be diffeomorphic to a trefoil complement after handleslide and blow down the +1 unknot component. A type-$D$ structure $(N_1,(\delta_1)^1):=\widehat{CFA}(\mathcal{H}_{+2}) \boxtimes \widehat{CFDD}(\mathcal{H}_{(2,4)})$ computes as

\begin{displaymath}
  \xymatrix @-1pc {
  \mathbf{p_1} \otimes \mathbf{a b} \ar@{-->}[rrrrdddd] & & \mathbf{q} \otimes
  \mathbf{x_3 y_3} \ar[ll]_{\rho_2} & & \mathbf{q} \otimes \mathbf{a y_2} \ar[ll]_{\rho_3} \ar[dd]^{\rho_1} \\
  & & & & \\
  & & & & \mathbf{q} \otimes \mathbf{x_1 y_1} \\
  & & & & \\
  & & & & \mathbf{p_2} \otimes \mathbf{a b} \ar[uu]_{\rho_{123}}
  }
\end{displaymath}

The dashed line is called an \emph{unstable chain}, where

\begin{displaymath}
  \xymatrix @-1pc {
  \cdots \rightarrow \mathbf{p_1} \otimes \mathbf{a b} \ar[rr]_{\rho_{123}} & & \mathbf{p_2} \otimes
  \mathbf{x_2 b} \ar[rr]_{\rho_{23}} \ar[dd]_{\rho_{23}} & & \mathbf{q} \otimes \mathbf{x_1 y_3} \ar[dd]_{1} \ar[rr]_{\rho_{23}} & & \mathbf{p_1}
  \otimes \mathbf{x_2 b} \ar[rr]_{\rho_2} & & \mathbf{p_2} \otimes \mathbf{a b} \rightarrow \cdots \\
  & & & & & & & & \\
  & & \mathbf{q} \otimes \mathbf{x_3 y_1} \ar[rr]_{1} & & \mathbf{q} \otimes \mathbf{x_2 y_2} & & & &
  }
\end{displaymath}

We claim that the chain complex described above is homotopy equivalent to a complex $(N_2, (\delta_2)^1)$,  which is identical to the complex above except for the unstable complex that has been replaced by
\begin{displaymath}
  \xymatrix @-1pc {
  \cdots \rightarrow \mathbf{p_1} \otimes \mathbf{a b} \ar[rr]_{\rho_{123}} & & \mathbf{p_2} \otimes
  \mathbf{x_2 b} \ar[rr]_{\rho_{23}} & & \mathbf{q} \otimes \mathbf{x_1 y_3} \ar[rr]_{\rho_{23}} & & \mathbf{p_1}
  \otimes \mathbf{x_2 b} \ar[rr]_{\rho_2} & & \mathbf{p_2} \otimes \mathbf{a b} \rightarrow \cdots \\
  }
\end{displaymath}

Define a map $\pi : N_1 \rightarrow N_2$ such that $\pi ( \mathbf{q} \otimes \mathbf{x_3 y_1} )=0$, $\pi ( \mathbf{q} \otimes \mathbf{x_2 y_2} )=0$, and otherwise identity. We also define a map $\iota : N_2 \rightarrow N_1$ as an inclusion. Then, $\pi \circ \iota = \mathbb{I}_{N_2}$ is obvious. In addition, a homotopy equivalence $H : N_1 \rightarrow N_1$ is given as
\begin{displaymath}
  H(x) := \left\{
  \begin{array}{ll}
    \mathbf{q} \otimes \mathbf{x_3 y_1} & \textrm{if $x = \mathbf{q}
    \otimes \mathbf{x_2 y_2}$} \\
    \mathbf{q} \otimes \mathbf{x_1 y_3} + \mathbf{q} \otimes
    \mathbf{x_3 y_1} & \textrm{if $x= \mathbf{q} \otimes \mathbf{x_1
    y_3}$ } \\
    \mathbf{p_2} \otimes \mathbf{x_2 b} & \textrm{if $x= \mathbf{p_2} \otimes \mathbf{x_2
    b}$ } \\
    0 & \textrm{otherwise.}
  \end{array}
  \right.
\end{displaymath}
which extends to a $\mathcal{A}(T)$-equivariant map. Then, it is clear that $\iota \circ \pi = (\delta_1)^1 \circ H + H \circ (\delta_1)^1$. \\

\begin{rem}
Compare the above result with \cite[Section 11.5]{LOT08}, from which they spelled out an algorithm to recover $\widehat{CFD}(S^3 \backslash \nu K)$ from $CFK^-$. According to their notation, the length of the unstable chain is 3 (the number of generators between two outermost ones). This length is closely related to the framing of the knot complement and concordance invariant $\tau(K)$. See \cite[Equation(11.18)]{LOT08}. In our case, the framing of the left component of the link was originally -1, but a handleslide procedure has added +4 and therefore the framing is 3. Since $\tau(\textrm{Trefoil})=1$ is less than the framing, the length of the unstable chain agrees with the framing. Interested readers will find the precise description of the relation between $\tau(K)$ and the unstable chain in~\cite[Theorem A.11]{LOT08}.
\end{rem}

\subsection{An integral surgery on Hopf link}

Hopf link is $(2,2)$-torus link. If $n_1$ and $n_2$ are two positive integers such that $n_1 n_2 \neq 1$, then $(n_1, n_2)$-surgery on Hopf link produces the lens space $L(n_1 n_2 - 1, n_1)$. The Heegaard Floer homology of the lens space has $n_1 n_2 -1$ generators whose differentials equal zero. \\

The diagram of Hopf link complement is easy. In addition, $\alpha_1^{a,L}$(respectively, $\alpha_1^{a,R}$) does not intersect $\beta_1$(respectively, $\beta_2$); therefore pairing the diagram with $\mathcal{H}_{n_1}^L$ and $\mathcal{H}_{n_2}^R$ will give a closed Heegaard diagram of the lens space $L(n_1 n_2 - 1, n_1)$. The $\mathcal{A}_{\infty}$ relation of $\widehat{CFA}(\mathcal{H}_m)$ is as follows. See Figure~\ref{fig:surgery}.
\begin{eqnarray*}
  m(q, \rho_2) & = & p_1 \\
  m(p_i, \rho_3, \underbrace{\rho_{23}, \cdots, \rho_{23}}_{j \textrm{ times}}, \rho_2 )
  & = & p_{i+j+1} \\
  m(p_m, \rho_3, \rho_2, \rho_1) & = & q
\end{eqnarray*}
$\widehat{CFDD}(S^3 \backslash \nu( \textrm{Hopf link} ))$ has two generators $\mathbf{a b}$ and $\mathbf{x_1 y_1}$. Its type-$D$ structure is given below.
\begin{eqnarray*}
  \delta^1 (\mathbf{a b}) & = & (\rho_1 \otimes \sigma_3 + \rho_3
  \otimes \sigma_1 + \rho_{123} \otimes \sigma_{123}) \otimes
  \mathbf{x_1 y_1} \\
  \delta^1 (\mathbf{x_1 y_1}) & = & \rho_2 \otimes \sigma_2 \otimes
  \mathbf{a b}
\end{eqnarray*}

\begin{rem}
See \cite[Proposition 10.1]{LOT11}. Note that Hopf link complement is $T^2 \times [0,1]$ and it is exactly an identity module described there.
\end{rem}

Let $p_i^L$ and $q^L$($p_j^R$ and $q^R$, respectively) be the generators of the bordered Heegaard diagram $\mathcal{H}_{n_1}^L$ attached to the left ($\mathcal{H}_{n_2}^R$ attached to the right, respectively). Then, we have the following $n_1 n_2 + 1$ generators of $\widehat{CFA}(\mathcal{H}_{n_1}^L) \boxtimes \widehat{CFA}(\mathcal{H}_{n_2}^R) \boxtimes \widehat{CFDD}(S^3 \backslash \nu( \textrm{Hopf link} ))$.
\begin{eqnarray*}
  p_i^L \otimes p_j^R \otimes \mathbf{a b} & \quad i = 1, \cdots,
  n_1 \textrm{ and } j = 1, \cdots, n_2 \\
  q^L \otimes q^R \otimes \mathbf{x_1 y_1}. &
\end{eqnarray*}
The only nontrivial differential is
\begin{displaymath}
  \partial^{\boxtimes} (q^L \otimes q^R \otimes \mathbf{x_1 y_1}) =
  m(q^L, \rho_2) \otimes m(q^R, \sigma_2) \otimes \mathbf{a b} =
  p_1^L \otimes p_1^R \otimes \mathbf{a b}.
\end{displaymath}
Thus, the homology of $\widehat{CFA}(\mathcal{H}_{n_1}^L) \boxtimes \widehat{CFA}(\mathcal{H}_{n_2}^R) \boxtimes \widehat{CFDD}(S^3 \backslash \nu( \textrm{Hopf link} ))$ has $n_1 n_2 -1$ generators as expected.

\section{Homotopy Equivalence}
\label{sec:homequiv}

In this section, we streamline the type-$DD$ structure computed in Section~\ref{sec:main} to a type-$DD$ structure that does not involve any differential with the algebra element 1.

\begin{figure}
  \centering
  \includegraphics[width=1.1\textwidth]{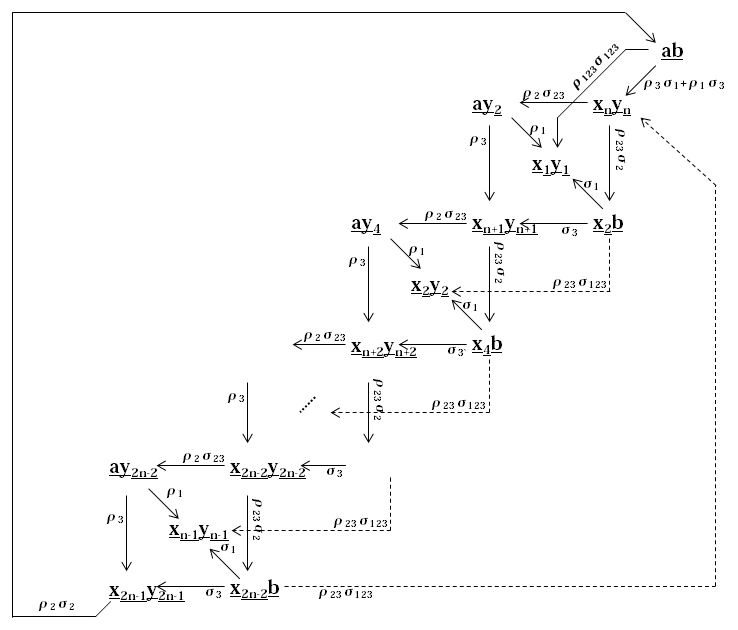}
  \caption{Simplified diagram of $\widehat{CFDD}$ of $(2,2n)$ torus link complement, $n \geq 3$. The generators are intentionally placed so that they form squares from top right to bottom left. The number of such squares is $n-1$.}
  \label{fig:simplified01}
\end{figure}

\begin{figure}
  \centering
  \includegraphics[width=1.1\textwidth]{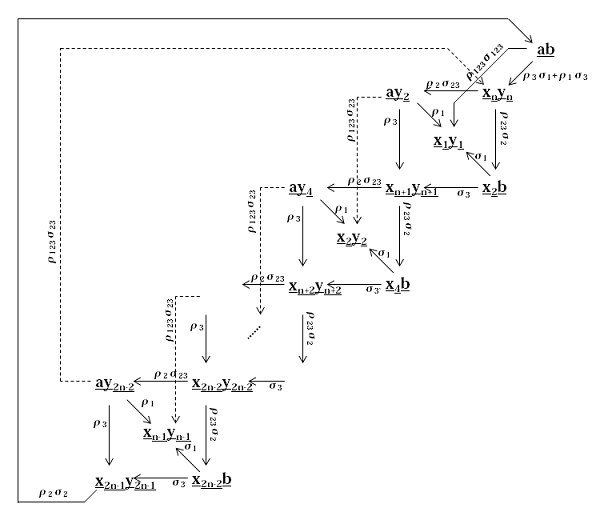}
  \caption{Another type-$DD$ structure homotopy equivalent to the original type-$DD$ structure. The differential represented by the dashed line can be changed to the differential in Figure~\ref{fig:simplified01}. }
  \label{fig:simplified02}
\end{figure}

\begin{prop}
  Type-$DD$ structure of link complement of $(2,2n)$-torus link complement, where $n \geq 3$, has the same homotopy type as the complex given in Figure~\ref{fig:simplified01}.
\end{prop}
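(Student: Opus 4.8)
The plan is to perform an explicit homotopy-equivalence reduction of the type-$DD$ structure computed in Section 3, systematically cancelling every arrow labelled by the algebra element $1$. Recall from the general theory of type-$D$ (and type-$DD$) structures over the torus algebra that if $\mathbf{x} \xrightarrow{1} \mathbf{y}$ is a differential with idempotent-coefficient the identity, then one may cancel the pair $(\mathbf{x},\mathbf{y})$: one deletes both generators and, for every pair of arrows $\mathbf{w}\xrightarrow{a}\mathbf{x}$ and $\mathbf{y}\xrightarrow{b}\mathbf{z}$, one adds a new arrow $\mathbf{w}\xrightarrow{ba}\mathbf{z}$ (composition in the algebra, which may of course be zero). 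This is the ``cancellation lemma'' for type-$D$ structures; I will invoke it as a black box, noting only that it produces a homotopy equivalent complex. First I would list all differentials in $\widehat{CFDD}(\mathcal{H},0)$ with algebra coefficient $1$: from the ``Algebra element $1$'' computation these are exactly the maps $\mathbf{x_i y_j}\mapsto \mathbf{x_{j-1}y_{i+1}}+\mathbf{x_{i+1}y_{j-1}}$ (and the boundary cases when $|i-j|=2$), together with the ``$1$''-labelled arrows appearing inside the $\rho_{23}$-block (e.g.\ the $\mathbf{q}\otimes\mathbf{x_1 y_3}\xrightarrow{1}\mathbf{q}\otimes\mathbf{x_2 y_2}$ type arrows in the examples, arising from the rectangular provincial domains $\mathbf{x_iy_j}$ with $|i-j|=2$).

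Next I would choose a consistent order in which to cancel these $1$-arrows so that each cancellation is legitimate (the target of one cancellation must not already have been removed). The natural bookkeeping is to note that the generators $\mathbf{x_iy_j}$ with $i+j$ fixed form a ``staircase'': the provincial differential shifts $(i,j)\mapsto(j-1,i+1)$, i.e.\ it moves along an anti-diagonal, and among generators with the same parity and the same value of $\min(i,j)$ there is a clear ``innermost'' one $\mathbf{x_k y_k}$ (killed by nothing) and pairs $\mathbf{x_i y_j}, \mathbf{x_j y_i}$. I would cancel from the ``thin'' end inward, each time using a $1$-arrow whose source is a generator not yet involved; after all cancellations the only surviving generators are the ones that genuinely support a nontrivial $\rho$- or $\sigma$-labelled differential, namely $\mathbf{ab}$, the $\mathbf{a y_{2k}}$, the $\mathbf{x_{2k}b}$, and the diagonal generators $\mathbf{x_{2k-1}y_{2k-1}}$ together with a single representative from each symmetric pair — precisely the $n-1$ squares' worth of generators drawn in Figure~\ref{fig:simplified01}. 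Then I would recompute the induced differentials: each composite arrow $\mathbf{w}\xrightarrow{ba}\mathbf{z}$ is read off by concatenating the Reeb-chord labels already determined in Section 3 (for instance $\rho_1$ followed by $\rho_{23}$ along a cancelled edge composes to $\rho_{123}$, matching the unstable-chain computations in the examples), and I would check this reproduces exactly the arrows in Figure~\ref{fig:simplified01}.

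The main obstacle I expect is the bookkeeping of the composite differentials created during cancellation: when a string of several $1$-arrows is cancelled in sequence, an arrow may be rerouted through several intermediate generators, and one must verify (i) that the algebra products of the accumulated Reeb chords are the ones claimed, (ii) that no unexpected new $1$-arrow is created (which would force a further cancellation and could in principle cascade), and (iii) that the result is independent of the order of cancellation up to isomorphism — which is automatic from the homotopy-equivalence statement but still needs the arrows to match the figure on the nose. I would handle (ii) by observing that, once all $\mathbf{x_iy_j}$ with $i\ne j$ are removed, no two surviving generators are connected by a provincial rectangle, so no new $1$-arrows can appear; and I would handle (i) by a direct but short check that the only composites arising are $\rho_1\cdot\rho_{23}=\rho_{123}$, $\rho_{23}\cdot\rho_{23}=0$, $\rho_2\cdot\rho_{23}=0$, etc., exactly as in the surgery examples of Section~\ref{sec:example}. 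Finally I would remark that for $n\ge 3$ the combinatorial pattern stabilises into the promised $n-1$ squares, whereas the $n=1,2$ cases (Hopf link, $(2,4)$) are degenerate and were already treated by hand in Section~\ref{sec:example}, which is why the hypothesis $n\ge 3$ appears.
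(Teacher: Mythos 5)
Your strategy---iteratively cancelling all arrows with algebra coefficient $1$ and tracking the induced differentials---is a legitimate route and genuinely different from the paper's, which never invokes a cancellation lemma: the paper writes down explicit type-$DD$ morphisms $F\colon M\to\mathcal{A}\otimes\mathcal{A}\otimes N$, $G\colon N\to\mathcal{A}\otimes\mathcal{A}\otimes M$ and a homotopy $H$, and verifies $F\circ G=\mathbb{I}_N$ and $G\circ F+\mathbb{I}_M=\delta^1\circ H+H\circ\delta^1$ by hand. However, your proposal has a concrete error in its key tool: you state the cancellation rule backwards. When the arrow $\mathbf{x}\xrightarrow{1}\mathbf{y}$ is cancelled, the induced differential comes from pairs of arrows $\mathbf{w}\xrightarrow{a}\mathbf{y}$ (into the cancelled \emph{target}) and $\mathbf{x}\xrightarrow{b}\mathbf{z}$ (out of the cancelled \emph{source}), contributing $\mathbf{w}\xrightarrow{ab}\mathbf{z}$; your rule, built from $\mathbf{w}\to\mathbf{x}$ and $\mathbf{y}\to\mathbf{z}$, is the zig-zag in the wrong direction and, applied literally, computes terms of $\delta^1\circ\delta^1$ rather than the reduced structure, so the procedure as written would not reproduce \ref{fig:simplified01}.

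Beyond that, the substantive content of the proposition is exactly the bookkeeping you defer, and the partial claims you do make about it are off. The surviving generators are not ``the diagonal generators together with a single representative from each symmetric pair'': the simplified complex has $4n-2$ generators, and under the paper's equivalence the generator $\underline{\mathbf{x_k y_k}}$ corresponds to the cycle $\mathbf{x_1 y_{2k-1}}+\mathbf{x_{2k-1}y_1}$ (resp.\ $\mathbf{x_{2k-2n+1}y_{2n-1}}+\mathbf{x_{2n-1}y_{2k-2n+1}}$ for $k\geq n$), with an algebra-weighted correction term $\rho_{23}\sigma_{23}\otimes\mathbf{x_{2k+1}y_1}$ for intermediate $k$---this correction, and the corresponding term $F(\mathbf{x_{2k}y_{2n-2}})=\rho_2\sigma_{23}\otimes\underline{\mathbf{a y_{2k}}}$, are precisely the non-obvious output of the rerouting that your plan promises to ``check'' but does not carry out. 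Also, your argument for why no cascade occurs is misplaced: new $1$-arrows \emph{are} created at intermediate stages (a $1$-arrow into $\mathbf{y}$ composed with a $1$-arrow out of $\mathbf{x}$ is again a $1$-arrow, and the provincial differential has two targets for $|i-j|>2$), so one must argue the process terminates and then verify that the accumulated composites match the arrows of \ref{fig:simplified01} on the nose---the analogue of the paper's intricate case-by-case homotopy $H$. As it stands the proposal is a plausible plan with a misstated main lemma and the decisive computation missing.
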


\begin{proof}
Let $(M, \delta^1)$ denote the type-$DD$ structure computed in Proposition~\ref{thm:main} and $(N, (\delta^1)')$ the type-$DD$ structure given as Figure~\ref{fig:simplified01}. More specifically, the map $(\delta^1)'$ has the following differentials.
\begin{eqnarray*}
  \underline{\mathbf{ab}} & \mapsto & \rho_{123}\sigma_{123} \otimes \underline{\mathbf{x_1 y_1}}
  + (\rho_1 \sigma_3 + \rho_3 \sigma_1) \otimes \underline{\mathbf{x_n y_n}}, \\
  \underline{\mathbf{a y_{2k}}} & \mapsto & \rho_1 \otimes \underline{\mathbf{x_k y_k}} +
  \rho_3 \otimes \underline{\mathbf{x_{n+k} y_{n+k}}}, \ \textrm{where $k=1,\cdots,n-1$} \\
  \underline{\mathbf{x_{2k} b}} & \mapsto & \sigma_1 \otimes
  \underline{\mathbf{x_k y_k}} + \sigma_3 \otimes
  \underline{\mathbf{x_{n+k} y_{n+k}}} + \rho_{23} \sigma_{123}
  \otimes \underline{\mathbf{x_{k+1} y_{k+1}}}, \ \textrm{where $k=1,\cdots,n-1$} \\
  \underline{\mathbf{x_k y_k}} & \mapsto & 0, \ \textrm{if $k=1, \cdots, n-1$} \\
  \underline{\mathbf{x_k y_k}} & \mapsto & \rho_2 \sigma_{23} \otimes \underline{\mathbf{a y_{2(k-n+1)}}} +
  \rho_{23} \sigma_2 \otimes \underline{\mathbf{x_{2(k-n+1)} b}}, \
  \textrm{if $k=n, \cdots, 2n-2$} \\
  \underline{\mathbf{x_{2n-1} y_{2n-1}}} & \mapsto & \rho_2 \sigma_2
  \otimes \underline{\mathbf{ab}}
\end{eqnarray*}
We shall now define type-$DD$ structure maps $F:M \rightarrow \mathcal{A} (- \mathcal{Z}_L) \otimes \mathcal{A} (- \mathcal{Z}_R) \otimes N$ and $G:N \rightarrow \mathcal{A} (- \mathcal{Z}_L) \otimes \mathcal{A} (- \mathcal{Z}_R) \otimes M$. First, the map $F$ is defined as below.
\begin{eqnarray*}
  F(\mathbf{ab}) & = & \underline{ \mathbf{ab} } \\
  F(\mathbf{a y_{2k} }) & = & \underline{ \mathbf{a y_{2k}}} \\
  F(\mathbf{x_{2k} b}) & = & \underline{ \mathbf{x_{2k} b}} \\
  F(\mathbf{x_1 y_{2k-1}}) & = & \underline{ \mathbf{x_k y_k} }, \ \textrm{for $k=1, \cdots, n$} \\
  F(\mathbf{x_{2k-1} y_{2n-1}}) & = & \underline{ \mathbf{x_{k+n-1}
  y_{k+n-1} } }, \ \textrm{for $k=1, \cdots, n$} \\
  F(\mathbf{x_{2k} y_{2n-2}}) & = & \rho_2 \sigma_{23} \otimes
  \underline{\mathbf{a y_{2k}}}, \ \textrm{for $k=1, \cdots, n-1$},
\end{eqnarray*}
and zero otherwise. \\

The map $G$ is defined as follows.
\begin{eqnarray*}
  G(\underline{\mathbf{ab}}) & = & \mathbf{ab} \\
  G(\underline{\mathbf{a y_{2k}}}) & = & \mathbf{a y_{2k}} \\
  G(\underline{\mathbf{x_{2k} b}}) & = & \mathbf{x_{2k} b} \\
  G(\underline{\mathbf{x_1 y_1}}) & = & \mathbf{x_1 y_1} + \rho_{23}
  \sigma_{23} \otimes \mathbf{x_3 y_1} \\
  G(\underline{\mathbf{x_k y_k}}) & = & \mathbf{x_1 y_{2k-1}} +
  \mathbf{x_{2k-1} y_1} + \rho_{23} \sigma_{23} \otimes
  \mathbf{x_{2k+1} y_1}, \ \textrm{for $k=2, \cdots, n-1$} \\
  G(\underline{\mathbf{x_k y_k}}) & = & \mathbf{x_{2k-2n+1}
  y_{2n-1}} + \mathbf{x_{2n-1} y_{2k-2n+1}}, \ \textrm{for $k=n, \cdots,
  2n-2$} \\
  G(\underline{\mathbf{x_{2n-1} y_{2n-1}}}) & = & \mathbf{x_{2n-1}
  y_{2n-1}}
\end{eqnarray*}
These maps are easily seen satisfying the compatibility condition spelled out in \cite[Definition 2.2.55]{LOT11}. Then, the composition of two maps $F \circ G : N \rightarrow N$ is the identity map. Another composition $G \circ F$ is homotopic to identity by introducing the seemingly complicated map $H : M \rightarrow \mathcal{A} (- \mathcal{Z}_L) \otimes \mathcal{A} (- \mathcal{Z}_R) \otimes M$. For the generators of $M$ listed below, the map $H$ is defined as
\begin{eqnarray*}
  H(\mathbf{ab}) & = & 0 \\
  H(\mathbf{a y_{2k} }) & = & \rho_3 \otimes ( \mathbf{x_{2k+1}
  y_{2n-1} } + \mathbf{x_{2n-1} y_{2k+1}} ), \ \textrm{for $k=1,
  \cdots, n-2$} \\
  H(\mathbf{a y_{2n-2}}) & = & \rho_3 \otimes \mathbf{x_{2n-1}
  y_{2n- 1}} \\
  H(\mathbf{x_{2k} b}) & = & \sigma_3 \otimes ( \mathbf{x_{2n-1}
  y_{2k+1}} + \mathbf{x_{2k+1} y_{2n-1}} ), \ \textrm{for $k=1,
  \cdots, n-2$} \\
  H(\mathbf{x_{2n-2} b}) & = & \sigma_3 \otimes \mathbf{x_{2n-1}
  y_{2n-1}}
\end{eqnarray*}
Now, we need to define $H( \mathbf{x_i y_j})$. Before giving the definition, we will introduce the new notation $\mathbf{xy}(k,l) \in M$ for notational simplicity.
\begin{displaymath}
\mathbf{xy}(i,j) := \left\{ \begin{array}{ll}
  \mathbf{x_i y_j} + \mathbf{x_j y_i} & \textrm{if
  $i \neq j$} \\
  \mathbf{x_i y_j} & \textrm{if $i=j$}.
  \end{array} \right.
\end{displaymath}

\textbf{Case 1, if $i<j$}.
\begin{displaymath}
  H(\mathbf{x_i y_j}) = \left\{ \begin{array}{ll}
    \mathbf{xy}(i+1,j-1) & \textrm{if $i=1$ or $j=2n-1$} \\
    \mathbf{xy}(i+1,j-1) + \mathbf{x_{j+1} y_{i-1}} &
    \textrm{otherwise} \\
  \end{array} \right.
\end{displaymath}

\textbf{Case 2, if $i>j$}.
\begin{displaymath}
  H(\mathbf{x_i y_j}) = \left\{ \begin{array}{ll}
    \mathbf{xy}(i-1,j+1) + \rho_{23} \sigma_{23} \otimes
    \mathbf{xy}(i+1,j+1) & \textrm{if $j=1$ and $3 \leq i \leq
    2n-3$} \\
    \mathbf{xy}(i-1,j+1) & \textrm{otherwise}
  \end{array} \right.
\end{displaymath}

\textbf{Case 3, if $i=j$}.
\begin{displaymath}
  H(\mathbf{x_i y_j}) = \left\{ \begin{array}{ll}
    \rho_{23} \sigma_{23} \otimes \mathbf{x_2 y_2} & \textrm{if
    $i=j=1$} \\
    0 & \textrm{if $i=j=2n-1$} \\
    \mathbf{x_{i+1} y_{j-1}} & \textrm{otherwise}
  \end{array} \right.
\end{displaymath}

It is easy to verify that the above map satisfies $G \circ F + \mathbb{I}_M = \delta^1 \circ H + H \circ \delta^1$.
\end{proof}

\begin{rem}
  The symmetry of Figure~\ref{fig:26differential} seems to be lost after removing the differentials of the algebra element 1 since the differentials of algebra element $\rho_{23} \sigma_{123}$ are between $\underline{\mathbf{x_{2k} b}}$ and $\underline{\mathbf{x_{k+1} y_{k+1}}}$. This phenomenon is caused   because we set the map $F$ such that the bottom right corner of the original type-$DD$ structure ``collapses.'' If we set $F$ to collapse the top left corner of the original diagram, then the resulting complex will look like Figure~\ref{fig:simplified02}.
\end{rem}

%
%
%
%

\end{document}